\newcommand\mathcircled[1]{%
  \mathpalette\@mathcircled{#1}%
}
\newcommand\@mathcircled[2]{%
  \tikz[baseline=(math.base)] \node[draw,circle,inner sep=1pt] (math) {$\m@th#1#2$};%
}
\newcommand{\ve}{\varepsilon}
\newcommand{\A}{\mathscr A}
\newcommand{\FF}{\mathbb F}
\newcommand{\digr}{\operatorname{dir}}
\newcommand{\xdir}{x^{\digr}}
\newcommand{\out}{\operatorname{out}}
\newcommand{\inn}{\operatorname{in}}
\newtheorem{theorem}{Theorem}[section]
\newtheorem{corollary}[theorem]{Corollary}
\newtheorem{proposition}[theorem]{Proposition}
\newtheorem{lemma}[theorem]{Lemma}
\theoremstyle{definition}
\newtheorem{example}{Example}[section]
\newtheorem{remark}[example]{Remark}
\begin{document}
\author{Donald M.\ 
Larson\footnote{The Catholic
University of America,
620 Michigan Ave NE, Washington
DC 20064
}}
\title{Connectedness
of graphs
arising from the dual Steenrod
algebra}

\maketitle
%%MENTION WHITNEY'S THEOREM ON
%2-CONNECTIVITY (Bondy p. 44) in open
%questions section!  Is there a Steenrod
%interpretation of this
%"connectivity" result?

\begin{abstract}
We establish connectedness
criteria for graphs associated
to monomials in certain quotients
of the mod 2 dual Steenrod algebra
$\A^*$.
We also investigate questions about trees and Hamilton 
cycles in the context of these
graphs.  Finally, we improve
upon a known connection between
the graph theoretic interpretation
of $\A^*$ and its structure
as a Hopf algebra.\\

\noindent{\bf Keywords.} 
Steenrod algebra, Hopf algebras,
graph theory.

\noindent{\bf Mathematics
Subject Classification 2020.}
\href{
https://mathscinet.ams.org/mathscinet/msc/msc2020.html?t=55Sxx&btn=Current
}{55S10}, 
\href{
https://mathscinet.ams.org/mathscinet/msc/msc2020.html?t=16Txx&btn=Current
}{16T05}, 
\href{
https://mathscinet.ams.org/mathscinet/msc/msc2020.html?t=05Cxx&btn=Current
}{05C90}
\end{abstract}

\section{Introduction}
\label{Sec:Intro}
The mod
2 Steenrod algebra $\A_*$ and
its dual $\A^*$ act on mod
2 cohomology and homology,
respectively, making them
indispensable computational
tools in homotopy theory
at the prime 2.  It is therefore
desirable to study 
these algebras from a variety
of 
points of view (see, e.g., 
\cite{Smith,Wood3,Wood4,Wood2}
or the bibliography of \cite{Wood}).
The purpose of this paper is to
study the mod 2 dual Steenrod algebra
$\A^*$ from the point of view of
graph theory, as first advocated
by R.\ M.\ W.\ Wood in \cite[\S8]{Wood}
and subsequently advanced by
C.\ Yearwood in \cite{Yearwood}.  

The algebra $\A^*$ has the structure
of a graded polynomial algebra.
More precisely,
$\A^*=\FF_2[\xi_1,\xi_2,\xi_3,
\ldots]$
where $|\xi_i|=2^i-1$.
Given an integer $n\geq0$, we
may form the truncated polynomial
algebra $\A^*(n)=\A^*/I(n)$, where
\begin{equation}\label{Eqn:Ideal}
I(n)=\left(\xi_1^{2^{n+1}},
\xi_2^{2^n},\xi_3^{2^{n-1}},\ldots,
\xi_{n+1}^2,\xi_{n+2},
\xi_{n+3},\ldots
\right).
\end{equation}
Milnor \cite{Milnor} dualized
these finite quotients over
$\FF_2$ to yield
finite subalgebras
$\A_*(n)\subset\A_*$
that facilitate computations of homotopy groups.
For example, Adams spectral
sequence computations  
involving Ext for modules
over $\A_*$ can sometimes
be done instead over $\A_*(n)$
for some $n$,
via a change-of-rings
isomorphism 
\cite[A1.3.12]{Ravenel} (see also
\cite[\S2]{Bailey} for concrete examples of this phenomenon).
%VERIFY THE NUMBERING OF THIS
%REFERENCE IN 1986 VERSION.
%ALSO CITE BAILEY JPAA
%AS AN EXAMPLE!

Wood
gives a graph theoretic interpretation
of the quotients $\A^*(n)$
by associating to every
monomial $x\in\A^*(n)$ a graph,
which by abuse of notation we shall
also denote $x$,
on the vertex set 
$\{2^0,2^1,\ldots,2^{n+1}\}$.
We shall describe the construction
of these graphs in 
Section \ref{Sec:GraphTheory}.
% One of our aims is to 
% characterize connectedness 
% of the graph $x$
% using only the data of its
% underlying monomial.  
Let us write
$x=\xi_1^{r_1}\xi_2^{r_2}
\cdots\xi_{n+1}^{r_{n+1}}$, where
$0\leq r_i\leq 2^{n+2-i}-1$
and where each $r_i$ has the
dyadic expansion
\begin{equation}
\label{Eqn:Dyadic}
r_i=\sum_{m=0}^{n+1-i}
a_{n+1-m,n+1-i-m}2^{n+1-i-m}.
\end{equation}
Our first theorem gives 
connectedness
criteria for $x$ using only the 
data of its underlying monomial.
\begin{theorem}\label{Thm:Main}
%With $x\in\A^*(n)$ as above,
The graph
$x$ is connected if and only if
the integers
\begin{equation}\label{Eqn:Conn}
C(p,q):=\sum_{t=1}^{n+1}\sum_T\prod_{k=1}^t
a_{\min(p_{k-1},p_k),\max(p_{k-1},p_k)}
\end{equation}
are positive for all $0\leq p< q\leq n+1$,
where $T$ is the set of all $(t+1)$-tuples
$(p_0,p_1,\ldots,p_t)\in\{0,\ldots,n+1\}^{t+1}$
such that $p=p_0\neq p_1\neq\cdots\neq p_t=q$.
\end{theorem}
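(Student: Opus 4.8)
The plan is to recognize the elaborate-looking quantity $C(p,q)$ as a walk count in the graph $x$, after which the statement collapses to the classical characterization of connectivity in terms of powers of the adjacency matrix. First I would extract from the defining construction of $x$ the observation that, under the symmetry convention $a_{p,q}=a_{q,p}$ forced by the appearance of $\min$ and $\max$ in \eqref{Eqn:Conn}, the number $a_{p,q}\in\{0,1\}$ is precisely the indicator that $\{2^p,2^q\}$ is an edge of $x$. Indeed, tracing the dyadic expansion \eqref{Eqn:Dyadic}, the bit $a_{p,q}$ (say with $p>q$) is the coefficient of $2^q$ in $r_{p-q}$, and the factor $\xi_{p-q}^{2^q}$ it records carries internal degree $2^q(2^{p-q}-1)=2^p-2^q$, matching the edge joining $2^q$ to $2^p$. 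Writing $A=(a_{p,q})_{0\le p,q\le n+1}$, I would note that $A$ is symmetric with zero diagonal and $0/1$ entries, i.e.\ the adjacency matrix of the simple, loopless graph $x$ on the $n+2$ vertices $2^0,\dots,2^{n+1}$.

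Next I would evaluate the inner double sum in \eqref{Eqn:Conn} for fixed $t$. For a tuple $(p_0,\dots,p_t)\in T$ the product $\prod_{k=1}^{t}a_{\min(p_{k-1},p_k),\max(p_{k-1},p_k)}$ equals $\prod_{k=1}^{t}A_{p_{k-1}p_k}$, which is $1$ exactly when every consecutive pair is adjacent (that is, when $(p_0,\dots,p_t)$ is a walk in $x$) and $0$ otherwise. The constraint $p_{k-1}\neq p_k$ defining $T$ costs nothing, since any tuple with a repeated consecutive entry contributes a diagonal factor $A_{pp}=0$ and so is already absent from the standard expansion of $(A^t)_{pq}=\sum A_{p_0p_1}\cdots A_{p_{t-1}p_t}$. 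Hence $\sum_T\prod_{k=1}^{t}a_{\min,\max}=(A^t)_{pq}$, the number of walks of length $t$ from $p$ to $q$, and consequently
\[
C(p,q)=\sum_{t=1}^{n+1}(A^t)_{pq}.
\]

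Finally I would invoke the classical connectivity criterion. Since every entry of every power $A^t$ is a nonnegative integer, $C(p,q)>0$ if and only if some $(A^t)_{pq}>0$ for $1\le t\le n+1$, i.e.\ if and only if there is a walk of length at most $n+1$ from $p$ to $q$. Because $x$ has exactly $n+2$ vertices, any two vertices lying in the same component are joined by a path, hence by a walk, of length at most $n+1$; conversely a walk of any length forces the endpoints into a common component. Thus $C(p,q)>0$ exactly when $2^p$ and $2^q$ lie in the same component, and $x$ is connected if and only if this holds for every pair, which by the symmetry $C(p,q)=C(q,p)$ need only be checked for $0\le p<q\le n+1$. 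The one genuine obstacle is the bookkeeping in the first step: correctly matching the doubly-indexed digits $a_{p,q}$ of \eqref{Eqn:Dyadic} to the edge set produced by the construction of Section~\ref{Sec:GraphTheory}, and confirming the symmetry convention. Once that identification is secured, the remainder is the routine walk-counting argument sketched above.
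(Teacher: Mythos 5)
Your proposal is correct and follows essentially the same route as the paper: identify the $a_{p,q}$ as the entries of the adjacency matrix of $x$ (the paper's Lemma~\ref{Lem:Entry}), recognize the inner sum as the $(p,q)$th entry of $(A_x)^t$, and apply the standard walk-counting connectivity criterion (the paper's Proposition~\ref{Prop:Conn}). Your explicit check that the $p_{k-1}\neq p_k$ constraint is harmless because the diagonal of $A_x$ vanishes is a welcome detail that the paper leaves implicit.
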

With
Theorem \ref{Thm:Main} in hand, we may
easily identify
which graphs $x$ are trees. Let $\alpha(m)$
denote the number of 1s in the dyadic expansion
of an integer $m$.
\begin{theorem}\label{Thm:Trees}
The graph $x$ is a tree
if and only if the integers
\eqref{Eqn:Conn} are positive (that is
to say, $x$ is connected) and 
$\displaystyle\sum_{i=1}^{n+1}\alpha(r_i)=n+1$.
\end{theorem}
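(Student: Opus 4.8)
The plan is to reduce the statement to the classical characterization of trees: a graph on $N$ vertices is a tree if and only if it is connected and has exactly $N-1$ edges. Since the graph $x$ lives on the vertex set $\{2^0,2^1,\ldots,2^{n+1}\}$, which has $N=n+2$ vertices, being a tree is equivalent to being connected together with having precisely $n+1$ edges. Theorem \ref{Thm:Main} already characterizes connectedness via positivity of the integers \eqref{Eqn:Conn}, so the entire content of the proof lies in identifying the number of edges $|E(x)|$ with $\sum_{i=1}^{n+1}\alpha(r_i)$. Once this identity is in place, the theorem follows by simply invoking the standard tree criterion.

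First I would make explicit the correspondence between the binary digits $a_{j,k}$ appearing in the dyadic expansion \eqref{Eqn:Dyadic} and the potential edges of $x$. Reading off \eqref{Eqn:Dyadic}, for a fixed $i$ the coefficient of $2^k$ in $r_i$ is $a_{i+k,\,k}$; that is, the digits of $r_i$ are exactly the symbols $a_{j,k}$ with $j-k=i$. As $i$ runs from $1$ to $n+1$ and $k$ from $0$ to $n+1-i$, the pairs $(j,k)$ with $0\leq k<j\leq n+1$ are each enumerated exactly once. By the construction of $x$ recalled in Section \ref{Sec:GraphTheory}, each coefficient $a_{j,k}$ equal to $1$ contributes precisely the edge joining $2^k$ and $2^j$ (this is the symmetric reading of $a$ that reconciles the index orders in \eqref{Eqn:Dyadic} and \eqref{Eqn:Conn}), so the adjacency data of $x$ is encoded faithfully by the array $(a_{j,k})$.

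Next I would count edges by regrouping this array according to the difference $i=j-k$. Summing over all pairs gives
\[
|E(x)|=\sum_{0\leq k<j\leq n+1}a_{j,k}
=\sum_{i=1}^{n+1}\sum_{k=0}^{n+1-i}a_{i+k,\,k}
=\sum_{i=1}^{n+1}\alpha(r_i),
\]
where the last equality holds because, for each $i$, the inner sum is by definition the number of $1$s in the dyadic expansion of $r_i$. This edge-counting identity is the crux of the argument, after which one concludes: $x$ is a tree if and only if it is connected and $|E(x)|=(n+2)-1=n+1$, i.e.\ if and only if the integers \eqref{Eqn:Conn} are positive and $\sum_{i=1}^{n+1}\alpha(r_i)=n+1$.

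I expect the only genuine obstacle to be verifying the edge-counting identity cleanly against the construction of Section \ref{Sec:GraphTheory}. Specifically, one must confirm that the construction places an edge between $2^k$ and $2^j$ exactly when $a_{j,k}=1$, with no loops and no multi-edges, since any repetition would break the correspondence $|E(x)|=\sum_i\alpha(r_i)$ and hence the equivalence between the edge count and the condition $\sum_i\alpha(r_i)=n+1$. Apart from this bookkeeping, the proof is a direct combination of Theorem \ref{Thm:Main} with the elementary fact that a connected graph is a tree precisely when its edge count is one less than its vertex count.
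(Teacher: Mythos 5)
Your proof is correct and follows essentially the same route as the paper: it combines Theorem \ref{Thm:Main} with the classical fact that a connected graph on $N$ vertices is a tree precisely when it has $N-1$ edges, after identifying the edge count of $x$ with $\sum_{i=1}^{n+1}\alpha(r_i)$ via Wood's construction (this identification is Proposition 3.3.8 of Yearwood's thesis in the paper). Your explicit bookkeeping of the digits $a_{i+k,k}$ is a slightly more detailed version of the same edge-counting step.
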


In \cite{Yearwood}, C.\ Yearwood 
sharpens the ideas in \cite{Wood} by
carefully 
interpreting the Hopf algebra structure
on $\A^*$ and its quotients $\A^*(n)$
graph theoretically.  We shall have more to say about this
in Section \ref{Sec:Hopf}.
One important
implication of
Yearwood's work is that the graphs
$x$ are perhaps most naturally viewed
as {\em digraphs}, with edges
oriented in the direction of
the larger vertex
(i.e., $2^{\alpha}\to 2^{\beta}$ if 
$0\leq\alpha<\beta\leq n+1$).
With this in mind, we offer
a digraph version of Theorem \ref{Thm:Main}.
Let $x^{\digr}$ denote the graph $x$ viewed as a digraph.  The
appropriate connectedness property
to study
for $x^{\digr}$ is that of
being {\em unilateral} (see Section \ref{Sec:GraphTheory}).

\begin{theorem}\label{Thm:Dir}
The digraph $x^{\digr}$ is 
unilateral
if and only if the integers
\begin{equation}\label{Eqn:Uni}
U(p,q):=\sum_{t=1}^{n+1}\sum_{T'}\prod_{k=1}^t
a_{p_{k-1},p_k}
\end{equation}
are positive for all $0\leq p< q\leq n+1$, 
where $T'$ is the set of all
$(k+1)$-tuples $(p_0,p_1,\ldots,p_t)
\in\{0,\ldots,n+1\}^{t+1}$
such that $p=p_0<p_1<\cdots<p_t=q$.
\end{theorem}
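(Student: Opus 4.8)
The plan is to mirror the proof of Theorem~\ref{Thm:Main}, exploiting the fact that orienting each edge toward its larger endpoint turns $\xdir$ into a directed acyclic graph in which \emph{every} directed path is automatically strictly increasing in its vertex labels. First I would record the combinatorial meaning of the summand in \eqref{Eqn:Uni}: for a tuple $(p_0,\ldots,p_t)\in T'$, the factor $a_{p_{k-1},p_k}$ is the indicator of the edge $\{p_{k-1},p_k\}$ of $x$, so the product $\prod_{k=1}^t a_{p_{k-1},p_k}$ equals $1$ precisely when each required edge is present, i.e.\ when $(p_0,\ldots,p_t)$ is a directed path of $\xdir$ (consecutive vertices are adjacent, and each such edge is oriented upward by strict monotonicity). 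Since the defining condition $p=p_0<p_1<\cdots<p_t=q$ forces the vertices to be pairwise distinct, no repeated-vertex bookkeeping is needed, and $U(p,q)$ counts directed paths from $p$ to $q$ in $\xdir$ of length at most $n+1$.

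Next I would argue that the cutoff $t\le n+1$ discards nothing. The elementary reason is that any directed path in $\xdir$ is strictly increasing and hence uses at most $n+2$ of the vertices $\{0,\ldots,n+1\}$, so its length cannot exceed $n+1$ and it is already counted. Equivalently, and more conceptually, let $A$ be the $(n+2)\times(n+2)$ adjacency matrix of $\xdir$, with $A_{pq}=a_{p,q}$ when $p<q$ and $A_{pq}=0$ otherwise; then $U(p,q)=\bigl(\sum_{t=1}^{n+1}A^t\bigr)_{pq}$. Because $A$ is strictly upper triangular it is nilpotent with $A^{n+2}=0$, so $\sum_{t=1}^{n+1}A^t=\sum_{t\ge1}A^t$ and $U(p,q)$ equals the \emph{total} number of directed paths from $p$ to $q$. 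In particular $U(p,q)>0$ if and only if $q$ is reachable from $p$ by a directed path in $\xdir$.

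Finally I would reduce unilateral connectedness to one-directional reachability. By definition $\xdir$ is unilateral exactly when, for every unordered pair of vertices, at least one is reachable from the other by a directed path. Fix $0\le p<q\le n+1$. Since every edge of $\xdir$ is oriented from the smaller label to the larger one, no directed path can decrease a label, so there is \emph{no} directed path from $q$ to $p$; thus the pair $\{p,q\}$ is unilaterally joined if and only if there is a directed path from $p$ to $q$, i.e.\ if and only if $U(p,q)>0$ by the previous paragraph. Conjoining over all pairs $p<q$ shows that $\xdir$ is unilateral exactly when every $U(p,q)$ is positive.

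The main obstacle is not any single hard computation but rather getting the two reductions exactly right: verifying that the strictly increasing tuples in $T'$ enumerate directed paths with neither repetition nor omission (so that $U(p,q)$ is genuinely a path count), and pinning down that the one-way structure of the orientation collapses the ``either direction'' clause in the definition of unilaterality to the single requirement that the smaller vertex reach the larger. Both hinge on the acyclicity forced by the edge orientation; once that is in place the equivalence follows formally.
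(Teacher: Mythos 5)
Your proof is correct and follows essentially the same route as the paper: both identify $U(p,q)$ with the $(p,q)$th entry of $\sum_{t=1}^{n+1}(A_{\xdir})^t$ via Lemma \ref{Lem:Entry} and the strictly upper triangular adjacency matrix, and both reduce unilaterality to positivity of these entries. Your write-up merely makes explicit two points the paper leaves implicit, namely that strict monotonicity of labels forces every directed walk to be a path of length at most $n+1$ (so the truncation loses nothing) and that the ``either direction'' clause in the definition of unilaterality collapses to reachability from the smaller vertex.
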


Among the questions posed by
Wood in \cite{Wood} regarding his 
graph theoretic interpretation
of $\A^*(n)$ is whether there are
algebraic analogs of classical 
questions about Hamilton 
cycles.  In response, we offer
the following result.
\begin{theorem}
\label{Thm:Ham}
The graph $x\in\A^*(n)$ 
has a Hamilton cycle if 
$n>0$ and for every vertex
$2^j$ of $x$, 
\[
\#\{1\leq k\leq j\>:\>
a_{j,j-k}=1\}+
\#\{1\leq k\leq n+1-j\>:\>
a_{j+k,j}=1\}
\geq\frac n2.
\]
Moreover, the digraph 
$x^{\digr}$ has a directed
Hamilton path if and only if
$x$ is divisible by 
$\xi_1^{2^{n+1}-1}$.  
\end{theorem}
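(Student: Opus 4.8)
The plan is to treat the two assertions separately: the first as a minimum-degree (Dirac-type) criterion and the second as a direct structural argument. The first step, common to both, is to translate the coefficient data back into the graph. Recall from the construction that the edge $\{2^a,2^b\}$ with $a<b$ is present in $x$ exactly when the relevant dyadic coefficient equals $1$; consequently, for a fixed vertex $2^j$, the quantity $\#\{1\le k\le j: a_{j,j-k}=1\}$ counts the neighbors of $2^j$ among the smaller vertices $2^0,\ldots,2^{j-1}$, while $\#\{1\le k\le n+1-j: a_{j+k,j}=1\}$ counts its neighbors among the larger vertices $2^{j+1},\ldots,2^{n+1}$. Their sum is therefore precisely $\deg_x(2^j)$, so the hypothesis of the first assertion is a uniform lower bound on the minimum degree of $x$.

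For the Hamilton-cycle assertion I would then invoke Dirac's theorem: a graph on $N\ge 3$ vertices whose minimum degree is at least $N/2$ has a Hamilton cycle. Here $x$ has $N=n+2$ vertices, and $n>0$ guarantees $N\ge 3$, so the engine applies. The key step --- and the one I expect to be the main obstacle --- is to reconcile the degree bound furnished by the hypothesis with the threshold $N/2=(n+2)/2$ demanded by Dirac's theorem. Since the two extreme vertices $2^0$ and $2^{n+1}$ have purely one-sided neighborhoods, I would examine their degrees separately and argue that the stated bound, together with the linear (interval) structure of the vertex set, forces the effective minimum degree up to the required level; should a residual gap remain, a slightly sharper sufficient condition (Ore's theorem on the degree sums of non-adjacent pairs, or the Bondy--Chv\'atal closure) should absorb it. Verifying that the hypothesis meets the exact threshold, rather than merely coming close, is the delicate point of the whole argument.

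The second assertion is comparatively soft and I would dispatch it directly. In $x^{\digr}$ every edge is oriented toward the larger vertex, so every directed walk is strictly increasing; hence a directed Hamilton path, which must exhaust all $n+2$ vertices, is forced to be the monotone path $2^0\to 2^1\to\cdots\to 2^{n+1}$. Such a path exists if and only if each of the $n+1$ consecutive edges $\{2^{j},2^{j+1}\}$ is present, that is, if and only if the bits $0,1,\ldots,n$ of $r_1$ are all equal to $1$. Since $0\le r_1\le 2^{n+1}-1$, this happens exactly when $r_1=2^{n+1}-1$, which is the same as saying that $\xi_1^{2^{n+1}-1}$ divides $x$. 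Both implications follow at once from this observation, giving the claimed equivalence.
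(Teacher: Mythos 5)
Your identification of the two counting sets with $\deg_{\inn}(2^j)$ and $\deg_{\out}(2^j)$, hence of their sum with $\deg(2^j)$, is exactly the content of the paper's Lemma \ref{Lem:Deg}; and your treatment of the second assertion (every directed walk in $x^{\digr}$ is strictly increasing, so the only candidate Hamilton directed path is $2^0\to2^1\to\cdots\to2^{n+1}$, which exists precisely when $r_1=2^{n+1}-1$) coincides with the paper's argument and is correct.

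The issue you single out as ``the delicate point,'' however, is a genuine gap, and it cannot be closed as the statement stands. The graph $x$ has $n+2$ vertices, so Dirac's theorem demands minimum degree at least $(n+2)/2$, whereas the hypothesis supplies only $n/2=(n+2)/2-1$; neither Ore's theorem nor the Bondy--Chv\'atal closure absorbs the deficit, because the implication with threshold $n/2$ is simply false. Concretely, take $x=\xi_1^7=\xi_1\xi_1^2\xi_1^4\in\A^*(2)$: its graph is the path $2^0,2^1,2^2,2^3$ (with $2^4$ isolated excluded --- here $n=2$, so the vertex set is $\{1,2,4,8\}$ and the graph is exactly that path), every vertex satisfies the stated inequality with $n/2=1$ (endpoint degrees are $1$, interior degrees are $2$), and yet a path on four vertices has no Hamilton cycle. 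For what it is worth, the paper's own proof commits precisely the slip you were guarding against: it quotes Dirac's theorem (Theorem \ref{Thm:Dirac}) with the bound written as $n/2$ and then applies it to $x$ without converting that $n$ into the actual number of vertices $n+2$. The first assertion becomes true, and your Dirac argument goes through verbatim, if the right-hand side of the hypothesis is strengthened from $n/2$ to $(n+2)/2$.
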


The aforementioned Hopf algebra structure
on $\A^*(n)$, to be described in Section
\ref{Sec:Hopf}, includes a coproduct
$\Delta:\A^*(n)\otimes\A^*(n)\to\A^*(n)$
and an antipode
$c:\A^*(n)\to \A^*(n)$. 
Our last theorem is a generalization
of Lemmas 3.1.7 and 3.1.8 of \cite{Yearwood}.
\begin{theorem}
\label{Thm:Hopf} 
Let $\xi_i^{2^j}\in\A^*(n)$
(whose underlying graph is the single
edge connecting $2^j$ and $2^{i+j}$;
see Section \ref{Sec:GraphTheory}).
Then the coproduct $\Delta(\xi_i^{2^j})
\in\A^*(n)$ is the sum of tensors
of all pairs of edges that make length 2
directed paths from $2^j$ to $2^{i+j}$,
and the antipode $c(\xi_i^{2^j})
\in\A^*(n)$ is the sum of all directed
paths from $2^j$ to $2^{i+j}$.
\end{theorem}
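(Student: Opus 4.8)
The plan is to deduce both parts from Milnor's explicit coproduct formula on $\A^*$ together with the edge--monomial dictionary of Section \ref{Sec:GraphTheory}; since $I(n)$ is a Hopf ideal, $\Delta$ and $c$ on $\A^*(n)$ are computed by projecting the corresponding formulas from $\A^*$.

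For the coproduct, I would start from $\Delta(\xi_k)=\sum_{a=0}^k\xi_{k-a}^{2^a}\otimes\xi_a$ (with $\xi_0=1$). Because $\Delta$ is an algebra map and squaring is additive over $\FF_2$, raising to the $2^j$ power gives $\Delta(\xi_i^{2^j})=\sum_{a=0}^i\xi_{i-a}^{2^{a+j}}\otimes\xi_a^{2^j}$. Reading each tensor factor through the dictionary, $\xi_a^{2^j}$ is the edge $2^j\to 2^{a+j}$ and $\xi_{i-a}^{2^{a+j}}$ is the edge $2^{a+j}\to 2^{i+j}$, so the $a$th summand is exactly the ordered pair of edges forming the length-$2$ directed path $2^j\to 2^{a+j}\to 2^{i+j}$ (degenerate, with one empty edge $\xi_0=1$, when $a=0$ or $a=i$). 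Since $0\le a\le i$ forces $j\le a+j\le i+j\le n+1$, every such edge survives in $\A^*(n)$, so nothing is lost in the quotient and the coproduct statement follows.

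For the antipode, write $P(i,j)$ for the sum, over all directed paths $j=m_0<m_1<\cdots<m_\ell=i+j$, of the product $\prod_{k=1}^\ell\xi_{m_k-m_{k-1}}^{2^{m_{k-1}}}$ of the edge monomials along the path; the goal is $c(\xi_i^{2^j})=P(i,j)$. I would obtain a recursion for the left side from the antipode axiom $m(c\otimes\mathrm{id})\Delta=\eta\epsilon$: evaluating at $\xi_i^{2^j}$ and using the coproduct just computed, the $a=0$ term isolates $c(\xi_i^{2^j})$ and, signs vanishing over $\FF_2$, yields $c(\xi_i^{2^j})=\sum_{a=1}^i\xi_a^{2^j}\,c(\xi_{i-a}^{2^{a+j}})$ for $i>0$, with $c(\xi_0^{2^j})=1$. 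For the right side, splitting each path off its first edge $2^j\to 2^{a+j}$ (which contributes $\xi_a^{2^j}$) from the remaining path $2^{a+j}\to\cdots\to 2^{i+j}$ gives the matching recursion $P(i,j)=\sum_{a=1}^i\xi_a^{2^j}\,P(i-a,a+j)$ with $P(0,j)=1$. Induction on $i$ then forces $c(\xi_i^{2^j})=P(i,j)$.

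The step I expect to be most delicate is confirming that this identity is honest in the truncated algebra, i.e.\ that $P(i,j)$ really has one nonzero term per directed path rather than collapsing under the relations of $I(n)$. Here I would argue that, for a fixed $\xi_c$, its exponent in a path monomial is $\sum 2^{m_{k-1}}$ over the distinct bases $m_{k-1}\le n+1-c$ of the size-$c$ steps, which is strictly less than $2^{n+2-c}$ and hence below the truncation threshold, so no factor vanishes; and that the binary expansion of each such exponent recovers precisely the bases of the size-$c$ steps, hence the directed edge set and thus the path itself, so distinct paths give distinct monomials and nothing cancels over $\FF_2$. Throughout, the bookkeeping that demands the most care is the base-dependent Frobenius twist $2^{m_{k-1}}$ attached to each edge monomial; keeping these exponents aligned across the two recursions is the crux of the argument.
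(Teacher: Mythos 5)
Your coproduct argument coincides with the paper's: expand $\Delta(\xi_i^{2^j})=\Delta(\xi_i)^{2^j}$ via Milnor's formula and read each summand $\xi_{i-a}^{2^{a+j}}\otimes\xi_a^{2^j}$ as the pair of edges through the intermediate vertex $2^{a+j}$, with the $a=0$ and $a=i$ terms giving the degenerate paths. For the antipode you take a genuinely different route. The paper invokes Milnor's closed-form solution $c(\xi_i)=\sum_\pi\prod_{k}\xi_{\pi(k)}^{2^{\sigma(k)}}$ of the antipode recursion and then exhibits an explicit bijection between ordered partitions $\pi$ of $i$ and directed paths from $2^j$ to $2^{i+j}$ (intermediate vertices are the partial sums $\sigma(k)$, parts are the successive differences). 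You bypass the closed formula entirely: you extract the recursion $c(\xi_i^{2^j})=\sum_{a=1}^i\xi_a^{2^j}\,c(\xi_{i-a}^{2^{a+j}})$ directly from the antipode axiom applied to the coproduct you just computed, observe that the path-sum $P(i,j)$ satisfies the identical recursion by peeling off the first edge of each path, and conclude by induction on $i$. Both arguments are correct, and the combinatorial content is the same; yours is more self-contained in that it never needs Milnor's explicit antipode formula, while the paper's makes the partition-to-path dictionary explicit. Your closing verification that the identity is honest in $\A^*(n)$ --- each $\xi_c$-exponent $\sum 2^{m_{k-1}}$ stays below the truncation threshold $2^{n+2-c}$ because every base satisfies $m_{k-1}\le n+1-c$, and distinct paths yield distinct monomials so no terms cancel over $\FF_2$ --- addresses a point the paper's proof passes over silently, and is a worthwhile addition.
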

As a corollary, we obtain another
characterization of 
the unilaterality of $x^{\digr}$.
\begin{corollary}
\label{Cor:Hopf}
For $x\in\A^*(n)$,
the graph $x^{\digr}$ is unilateral
if and only if for each 
$\xi_i^{2^j}\in\A^*(n)$,
at least one summand of
$c(\xi_i^{2^j})$ is a factor of $x$.
\end{corollary}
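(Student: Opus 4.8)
The plan is to read the corollary off Theorem \ref{Thm:Hopf} once both sides of the asserted equivalence are rephrased in terms of subgraphs of $x^{\digr}$. The first step is bookkeeping on the index set: a generator $\xi_i^{2^j}$ lies in $\A^*(n)$ exactly when $i\ge 1$, $j\ge 0$ and $i+j\le n+1$, and under the edge--generator dictionary $\xi_i^{2^j}\leftrightarrow(2^j\to 2^{i+j})$ the assignment $(i,j)\mapsto(p,q)=(j,i+j)$ is a bijection onto $\{(p,q)\st 0\le p<q\le n+1\}$. Thus the generators $\xi_i^{2^j}\in\A^*(n)$ are indexed precisely by the ordered vertex pairs appearing in the definition of unilaterality.

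The second step is to interpret a single summand of the antipode. By Theorem \ref{Thm:Hopf}, $c(\xi_i^{2^j})$ is the formal sum, over all directed paths $P$ from $2^j$ to $2^{i+j}$, of the monomial whose graph is $P$. I would argue that such a summand is a factor of $x$ if and only if the edge set of $P$ is contained in that of $x^{\digr}$, i.e.\ $P$ is a directed $2^j\to 2^{i+j}$ path lying inside $x^{\digr}$. This rests on the monomial--graph dictionary: each edge contributes an independent dyadic digit to the relevant exponent $r_i$, so the path monomial is a factor of $x$ exactly when every edge of $P$ is already present in $x$. Making this precise---and in particular distinguishing it from the a priori weaker notion of polynomial divisibility in the quotient $\A^*(n)$, while checking that truncation never causes a path monomial to vanish---is the step that requires genuine care, and I expect it to be the main obstacle.

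Granting this, the clause ``at least one summand of $c(\xi_i^{2^j})$ is a factor of $x$'' is equivalent to ``$x^{\digr}$ contains a directed path from $2^p$ to $2^q$'' with $(p,q)=(j,i+j)$. Quantifying over all $\xi_i^{2^j}\in\A^*(n)$ and using the bijection from the first step turns the right-hand condition of the corollary into the statement that for every $0\le p<q\le n+1$ there is a directed path $2^p\to 2^q$ in $x^{\digr}$. Since every edge of $x^{\digr}$ points from the smaller vertex to the larger, the only possible directed connection between $2^p$ and $2^q$ runs from $2^p$ to $2^q$, so this is exactly unilaterality, and both implications follow at once. As a sanity check, this is consistent with Theorem \ref{Thm:Dir}: a summand is a factor of $x$ iff the corresponding product of edge-indicator digits equals $1$, and summing these products over all increasing tuples from $p$ to $q$ recovers $U(p,q)$, so the existence of a path is equivalent to $U(p,q)>0$.
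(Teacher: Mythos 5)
Your proposal is correct and follows essentially the same route as the paper: identify the summands of $c(\xi_i^{2^j})$ with directed paths from $2^j$ to $2^{i+j}$ via Theorem \ref{Thm:Hopf}, observe that since all edges point toward the larger vertex unilaterality reduces to the existence of such a path for every ordered pair of vertices, and match the two quantifications. The paper's own proof is just a two-sentence version of this; your extra care about the generator--pair bijection, the ``factor'' versus quotient-divisibility point, and the survival of path monomials under truncation fills in details the paper leaves implicit, but introduces no new idea.
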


The paper is structured as
follows.  Section 
\ref{Sec:GraphTheory}
establishes the necessary 
terminology from graph theory
and describes Wood's construction
of the graphs corresponding to
monomials $x\in\A^*(n)$.  In
Section \ref{Sec:Proofs} we
prove our connectedness
criteria, namely
Theorems \ref{Thm:Main}
and \ref{Thm:Dir}.
Section \ref{Sec:Trees} 
discusses trees and
Hamilton cycles and 
contains the proofs of Theorems
\ref{Thm:Trees} and
\ref{Thm:Ham}.  Finally,
in Section \ref{Sec:Hopf}, 
we discuss the Hopf algebra
structure of $\A^*(n)$ in the
context of graph theory, 
prove Theorem \ref{Thm:Hopf}
and Corollary \ref{Cor:Hopf},
and pose some open questions.

\subsection*{Acknowledgements}
The author would like to thank 
Caroline Yearwood, whose thesis 
was a major impetus for this work,
and her advisor Kyle Ormsby, who
kindly provided a copy of Yearwood's
thesis.  The author would also like to
thank Kiran Bhutani and Paul Kainen for
many helpful comments and suggestions.

\section{Graph theory background and Wood's construction}
\label{Sec:GraphTheory}
The purposes of this section
are to recall the 
necessary definitions from
graph theory and to describe
Wood's construction of graphs
associated to monomials in
$\A^*(n)$.  Our main graph
theory references
are \cite{Bondy} and \cite{West}.

\subsection{Definitions, 
conventions, and notation}
\label{Subsec:GraphDefns}
A {\bf graph} is an ordered
pair $G=(V_G,E_G)$ where
$V_G$ is the set of vertices,
and where $E_G$, the set of edges, is a subset of
the set $\binom{V_G}{2}$ of
unordered pairs of vertices
in $V_G$.  If 
$e=\{v_0,v_1\}\in E_G$,
we say
$v_0$ and $v_1$ are the 
{\bf ends} of $e$. 
All graphs in this paper
are {\bf finite}, meaning
$V_G$ and $E_G$ are finite sets,
and
{\bf simple}, meaning
no edge has
identical ends and no
two edges have the same pair
of ends.
A {\bf walk} in $G$ is a finite
non-empty sequence $v_0,v_1,\ldots,
v_k$ of vertices such that each
consecutive pair $v_i,v_{i+1}$
comprises the ends of an
edge in $E_G$.
If the vertices of a walk in $G$ are
distinct, it is called a 
{\bf path} in $G$.
We say the walk or path $v_0,v_1,\ldots,v_k$
has {\bf length} $k$.
We say $G$ is {\bf connected} if there exists
a path connecting any two distinct vertices.  
If $S$ is a set of vertices,
a {\bf complete graph}
on $S$ is a graph whose
vertex set is $S$ and whose
edge set contains one edge
for every pair of 
distinct vertices in $S$.  

A {\bf cycle} in $G$ is a finite
sequence $v_0,v_1,\ldots,v_k,
v_0$ of vertices such that
$v_0,v_1,\ldots,v_k$ is a path
and the pair $v_k,v_0$ comprises
the ends of an edge in $E_G$.
% A graph $H$ is a {\bf subgraph}
% of $G$ if $V_H\subset V_G$,
% $E_H\subset E_G$, and 
% $\psi_H$ is the restriction
% of $\psi_G$ to $E_H$.
% A subgraph $H$ of $G$ is 
% a {\bf tree}
% if it is maximal among all
% subgraphs of $G$ that do not
% contain a cycle.  
We say $G$ is {\bf acyclic} if it
contains no cycles.  
A {\bf tree} is an acyclic connected
graph.
A
{\bf Hamilton cycle} in $G$
is a cycle containing every
vertex of $G$.  The {\bf degree}
$\deg(v)$ of a vertex $v\in V_G$
is the number of edges 
in $E_G$ that have $v$ as an end.

A {\bf directed graph} (or
{\bf digraph}) is an
ordered pair $D=(V_D,E_D)$ where $V_D$ is the 
set of vertices, and where
$E_D$, the set of edges, 
is a subset of
the set of
{\em ordered} pairs of vertices in $V_G$.
If $e=(v_0,v_1)\in E_D$, 
we say $v_0$ is the
{\bf tail} of $e$ and $v_1$
is the {\bf head} of $e$.
Any digraph
has an underlying graph
by replacing each
ordered pair 
$(v_0,v_1)\in E_D$
with the corresponding
unordered pair
$\{v_0,v_1\}$.
All directed
graphs in this paper
have underlying graphs
that are 
finite and simple. A {\bf directed
walk} in $D$ is a finite 
non-empty sequence
$v_0,v_1,\ldots,v_k$ of 
vertices such that
each for consecutive pair
$v_i,v_{i+1}$, there is
an edge in $E_D$ with
tail $v_i$ and head $v_{i+1}$.
If the vertices of a directed
walk in $D$ are distinct, it is called
a {\bf directed path} in $D$.
As with walks or paths, 
we say the directed walk
or directed path
$v_0,v_1,\ldots,v_k$ has
{\bf length} $k$.  
A digraph $D$ is {\bf unilateral}
(see \cite[Exercise 10.2.2]{Bondy})
if for every pair of 
distinct vertices
$v_i,v_j\in V_D$, there is
a directed path starting at
$v_i$ and ending at $v_j$ or
vice versa.
A {\bf Hamilton directed path}
in $D$ is a directed path
containing every vertex of $D$.  
The {\bf out-degree} 
$\deg_{\out}(v)$
of a vertex
$v\in V_D$ is the number of edges
in $D$ that have $v$ as a tail,
and the {\bf in-degree}
$\deg_{\inn}(v)$ of $v$ is the number
of edges in $D$ that have $v$ as a
head.
Note that $\deg(v)=\deg_{\out}(v)
+\deg_{\inn}(v)$, where
we interpret
$\deg(v)$ as being the
degree of $v$ in the graph
underlying the digraph $D$.

If $G$ is a graph with ordered
vertex set $V_G=\{v_0,\ldots,
v_{n-1}\}$,
the {\bf adjacency matrix}
of $G$ is the $n\times n$
matrix $A_G$ with $(p,q)$th entry
equal to the number of edges
with ends $v_p$ and $v_q$.
(Here, we use $p$ and $q$ 
as row and column indices, respectively,
where $0\leq p\leq n-1$ and
$0\leq q\leq n-1$.)
If $D$ is a digraph with
vertex set $V_D=\{v_0,\ldots,
v_{n-1}\}$, the
{\bf adjacency matrix} of $D$
is the $n\times n$ matrix
$A_D$ with $(p,q)$th entry
equal to the number of edges
with tail $v_p$ and head $v_q$.
As a result of our conventions, 
all adjacency matrices $A_G$
associated to graphs $G$
in this paper
will be symmetric, binary (that is,
all entries either 0 or 1), and 
will have zeros along the main diagonal. Similarly, all adjacency matrices
$A_D$ associated to digraphs $D$ in this
paper will be binary and strictly
upper triangular.

\subsection{Wood's construction}
\label{Subsec:WoodConstr}
We now explain how Wood encodes
the algebras $\A^*(n)$ in terms
of graphs.  Recall from Section
\ref{Sec:Intro} that the graph $x$
underlying a monomial $x\in\A^*(n)$
will have vertex set $V_x=\{
2^0,2^1,\ldots,2^{n+1}\}$.

To begin, consider a monomial 
of the form $\xi_i^{2^j}\in\A^*(n)$
where $1\leq i\leq n+1$ and
$1\leq j\leq 2^{n+2-i}-1$.
Wood's construction declares that
the graph $\xi_i^{2^j}$
has edge set consisting of
a single edge with ends $2^j$ and
$2^{i+j}$.
\begin{example}\label{Ex:Edges}
%\begin{multicols}{2}
\begin{enumerate}
\item[(a)] $\xi_2\in\A^*(2)$
\[
\entrymodifiers={++[o][F-]}
\xymatrix%@-1pc
{
8&
1\ar@{-}[dl]\\
4&2
}\]
\item[(b)] $\xi_3^2\in\A^*(3)$
\[
\entrymodifiers={++[o][F-]}
\xymatrix{
16\ar@{-}[rrd]&8&1\\
*{}&4&2
}\]
\end{enumerate}
%\end{multicols}
\end{example}
\noindent If $x=\xi_1^{r_1}
\xi_2^{r_2}\cdots\xi_{n+1}^{r_{n+1}}
\in\A^*(n)$,
then we may use the dyadic expansions given in \eqref{Eqn:Dyadic} to
write $x$ as a product of monomials
of the form $\xi_i^{2^j}$,
and Wood's construction declares
the graph $x$ to have edge set
equal to 
the disjoint union of the corresponding edges.  
\begin{example}\label{Ex:MultEdges}
%\begin{multicols}{2}
\begin{enumerate}
\item[(a)]
$
x=\xi_1^6\xi_2\xi_3=
\xi_1^2\xi_1^4\xi_2\xi_3\in
\A^*(2)$
\[
\entrymodifiers={++[o][F-]}
\xymatrix%@-1pc
{
8&1\ar@{-}[dl]\ar@{-}[l]\\
4\ar@{-}[u]&2\ar@{-}[l]
}\]
\item[(b)] $x=\xi_1^{15}
\xi_3^2
=\xi_1\xi_1^2\xi_1^4\xi_1^8
\xi_3^2
\in\A^*(3)$
\[
\entrymodifiers={++[o][F-]}
\xymatrix{
16\ar@{-}[rrd]&8\ar@{-}[l]
&1\ar@{-}[d]%\ar@{-}[l]
\\
*{}&4\ar@{-}[u]&2\ar@{-}[l]
}\]
\end{enumerate}
%\end{multicols}
\end{example}
Any graph on the vertices
$\{2^0,2^1,\ldots,2^{n+1}\}$
has a corresponding monomial
in $\A^*(n)$.
Following the notation
in \cite{Wood},
we shall denote the top 
degree class
$\xi_1^{2^{n+1}-1}
\xi_2^{2^{n}-1}\cdots
\xi_{n+1}\in\A^*(n)$
by $\Delta$. The corresponding
graph $\Delta$ is a complete
graph on $\{2^0,2^1,\ldots,
2^{n+1}\}$.  At the other
extreme, the graph corresponding
to $1\in\A^*(n)$ is the
graph on $\{2^0,2^1,\ldots,
2^{n+1}\}$ with empty edge
set.  
% We also note that 
% $0\in\A^*(n)$ corresponds to any
% graph that attempts to include an
% ``out of bounds'' vertex (or, in
% algebra terms, any graph
% coming from a monomial in 
% the coset $0+I(n)$ of $I(n)$
% in $\A^*$), as shown below in 
% Example \ref{Ex:Extremes}(c).
\begin{example}\label{Ex:Extremes}
%\begin{multicols}{2}
\begin{enumerate}
\item[(a)]
$\Delta=
\xi_1^{15}\xi_2^7\xi_3^3
\xi_4
\in\A^*(3)$
\[
\entrymodifiers={++[o][F-]}
\xymatrix{
16\ar@{-}[rrd]
\ar@{-}[rd]
\ar@/^1pc/@{-}[rr]
&8\ar@{-}[l]
\ar@{-}[rd]
&1\ar@{-}[d]\ar@{-}[l]
\\
*{}&4\ar@{-}[u]
\ar@{-}[ur]
&2\ar@{-}[l]
}
\]
\item[(b)]
$1\in \A^*(3)$
\[
\entrymodifiers={++[o][F-]}
\xymatrix{
16&8&1\\
*{}&4&2
}
\]
% \item[(c)] $\xi_1
% \xi_2\xi_3\xi_4^2=0\in\A^*(3)$
% (the vertex $2^5=32$ is ``out of bounds'')
% \[
% \entrymodifiers={++[o][F-]}
% \xymatrix{
% 16&8&1\ar@{-}[d]
% \ar@{-}[dl]\ar@{-}[l]
% \\
% \mathit{32}\ar@/_1pc/@{--}[rr]&4&2
% }
% \]
\end{enumerate}
%\end{multicols}
\end{example}
\begin{remark}
If $x$ and $y$ are arbitrary monomials
in $\A^*(n)$, we can 
obtain the graph $xy$ from the 
individual graphs $x$ and $y$ via
the following procedure: 
(1) Overlay the graphs $x$ and $y$
on their common vertex set
$\{2^0,2^1,\ldots,2^{n+1}\}$; (2) 
For all pairs of edges between two
vertices $2^i$ and $2^j$, delete the
pair and, if possible, perform a ``carry'' by inserting
an edge between $2^{i+1}$ and 
$2^{j+1}$;
(3) Repeat until no further 
deletions/carries are required.
If $x=\xi_1^{r_1}\xi_2^{r_2}\cdots
\xi_{n+1}^{r_{n+1}}$ and
$y=\xi_1^{s_1}\xi_2^{s_2}\cdots
\xi_{n+1}^{s_{n+1}}$,
the carries of edges one would
perform correspond precisely
to the carries required when adding
the dyadic expansions of $r_i$
and $s_i$ for $1\leq i\leq n+1$.  
In fact, it is shown in
\cite{Yearwood} that if
the set of all possible graphs 
on $\{2^0,2^1,\ldots,2^{n+1}\}$
is endowed with this multiplication, and with 
addition given
by finite formal sums over $\FF_2$, 
the result is an $\FF_2$-algebra 
(referred to as a truncated 
``graph algebra''
in \cite{Yearwood})
that is isomorphic to $\A^*(n)$.  
\end{remark}

We noted in Section \ref{Sec:Intro} 
that it is natural to view 
graphs $x\in\A^*(n)$ as digraphs 
$x^{\digr}$
with edges oriented in the direction
of the larger vertex.  To do this
pictorially, one can put an arrow on each
edge pointing toward the head.  
Here is the graph 
$\xi_1^6\xi_2\xi_3\in\A^*(2)$ from
Example \ref{Ex:MultEdges}(a)
viewed as a digraph 
$(\xi_1^6\xi_2\xi_3)^{\digr}$:
\[
\entrymodifiers={++[o][F-]}
\xymatrix%@-1pc
{
8&1\ar[dl]\ar[l]\\
4\ar[u]&2\ar[l]
}\]
\begin{remark}
In the graph theory literature,
a directed graph $D$ is said
to be {\bf strongly connected} or
{\bf diconnected} (\cite[\S10.1]{Bondy})
if for any ordered pair of vertices
$(v_0,v_1)$, there is a 
directed path starting at $v_0$
and ending at $v_1$ in $D$. 
The way we orient edges in
graphs $x\in\A^*(n)$ makes it
impossible for any
corresponding $\xdir$
to be strongly connected.  
This is why we instead opt for
characterizing
the notion
of $\xdir$ being unilateral
as defined in the
exercises of \cite{Bondy}. 

\end{remark}

\section{Connectedness criteria}\label{Sec:Proofs}

In this section, we prove the 
connectedness criteria in
Theorems \ref{Thm:Main} and
\ref{Thm:Dir} using the basic
theory of adjacency matrices.
\subsection{Connectedness
criterion for $x\in\A^*(n)$}
\label{Subsec:MainProof}
We now
prove Theorem
\ref{Thm:Main}, which
asserts that the graph
$x\in\A^*(n)$ is connected if and only if
the integers $C(p,q)$ defined
in \eqref{Eqn:Conn} are 
positive.  

Let $G$ be a graph 
with ordered vertex set $V_G=
\{v_0,v_1,\ldots,v_{n-1}\}$
and corresponding adjacency matrix $A_G$.  
Given a positive integer $t$,
the $(p,q)$th entry of the
matrix $(A_G)^t$
is equal to the number of 
distinct walks of length $t$ in $G$ between
$v_p$ and $v_q$.  We have
the following well-known result from graph
theory as a consequence.  
\begin{proposition}\label{Prop:Conn}
The graph $G$ is connected
if and only if for all $0\leq p<q\leq n-1$, the $(p,q)$th entry of the matrix
\[
A_G+(A_G)^2+\cdots+(A_G)^{n-1}
\]
is positive.
\end{proposition}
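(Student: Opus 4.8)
The plan is to prove Proposition \ref{Prop:Conn} by combining two elementary facts: the interpretation of powers of the adjacency matrix in terms of counting walks, and the observation that in a graph on $n$ vertices, any two vertices joined by a walk are in fact joined by a walk (indeed a path) of length at most $n-1$. First I would recall, as stated just before the proposition, that the $(p,q)$th entry of $(A_G)^t$ equals the number of walks of length exactly $t$ from $v_p$ to $v_q$; since all entries of $A_G$ are nonnegative integers, the $(p,q)$th entry of the sum $A_G+(A_G)^2+\cdots+(A_G)^{n-1}$ is a sum of nonnegative integers, hence it is positive precisely when there exists at least one walk of length $t$ from $v_p$ to $v_q$ for some $t$ with $1\le t\le n-1$.

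Next I would argue both directions using the standard fact that $G$ is connected if and only if every pair of distinct vertices is joined by a path. For the forward direction, suppose $G$ is connected and fix $0\le p<q\le n-1$. Then there is a path from $v_p$ to $v_q$; since a path visits distinct vertices and $G$ has $n$ vertices, this path has length at most $n-1$ (and at least $1$, as $v_p\neq v_q$). A path is in particular a walk, so some term $(A_G)^t$ with $1\le t\le n-1$ contributes a positive integer to the $(p,q)$th entry, forcing that entry to be positive. For the converse, suppose the $(p,q)$th entry of the displayed sum is positive for all $0\le p<q\le n-1$; then for each such pair there is a walk of some length between $v_p$ and $v_q$, and I would invoke the elementary lemma that the existence of a walk between two vertices implies the existence of a path between them (obtained by excising any repeated vertices). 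This yields a path between every pair of distinct vertices, so $G$ is connected.

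The only subtlety worth spelling out is the length bound: I would note that a path in a graph on $n$ vertices has length at most $n-1$ because its vertices are distinct and there are only $n$ of them, which is exactly why truncating the geometric-style sum at the power $n-1$ loses no connectivity information. I do not expect a genuine obstacle here, since both ingredients (walk-counting by matrix powers, and walk-implies-path) are standard and the symmetry of $A_G$ lets me restrict attention to pairs with $p<q$ without loss of generality. The one point to state carefully is that positivity of a \emph{sum} of nonnegative entries is equivalent to positivity of at least one summand, which is what bridges the matrix condition and the combinatorial statement.
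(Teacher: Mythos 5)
Your proof is correct and follows exactly the route the paper intends: the paper states this proposition as an immediate consequence of the walk-counting interpretation of $(A_G)^t$, and your write-up simply fills in the standard details (path length at most $n-1$, walk implies path, positivity of a sum of nonnegative terms). No discrepancy with the paper's approach.
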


Let $x=\xi_1^{r_1}\xi_2^{r_2}
\cdots\xi_{n+1}^{r_{n+1}}\in\A^*(n)$
as in Section \ref{Sec:Intro}.  
Recall that the vertex
set of the graph $x$ is $V_x=\{2^0,2^1,\ldots,
2^{n+1}\}$.
Following the notation and conventions
from Subsection 
\ref{Subsec:GraphDefns}, let
$A_x$ denote the
$(n+2)\times(n+2)$ adjacency
matrix of $x$, whose rows and 
columns we shall index by
$p$ and $q$, respectively,
with $0\leq p\leq n+1$ and
$0\leq q\leq n+1$.  The key to our proofs
of Theorems \ref{Thm:Main} and 
\ref{Thm:Dir}
is the
connection between the dyadic expansions
of the exponents $r_i$ given in \eqref{Eqn:Dyadic}
and the entries of $A_x$.

\begin{lemma}[\cite{Wood},
or Lemma 3.2.4 of \cite{Yearwood}]
\label{Lem:Entry}
For $0\leq p<q\leq n+1$, the
$(p,q)$th entry of $A_x$ is
$a_{p,q}$, where $a_{p,q}$
is the coefficient on $2^q$ in the
dyadic expansion of $r_{p-q}$,
as in \eqref{Eqn:Dyadic}.
\end{lemma}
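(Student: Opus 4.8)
The plan is to unwind Wood's construction and read the entries of $A_x$ off directly; the lemma is essentially a restatement of that construction, so the content is entirely bookkeeping with the dyadic indexing in \eqref{Eqn:Dyadic}, and I expect the only delicate point to be matching subscripts correctly.

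First I would reduce to the single-edge monomials. By Wood's construction (Subsection \ref{Subsec:WoodConstr}), writing each exponent $r_i$ in its dyadic expansion \eqref{Eqn:Dyadic} expresses $x$ as a product of monomials $\xi_i^{2^j}$, one for each pair $(i,j)$ such that the coefficient of $2^j$ in $r_i$ equals $1$, and the graph $x$ is the union of the corresponding single edges. Since all graphs here are simple, $A_x$ is binary; hence for $0\leq p<q\leq n+1$ the $(p,q)$th entry is $1$ exactly when $2^p$ and $2^q$ are the two ends of one of these edges, and $0$ otherwise.

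Next, for fixed $p<q$ I would determine which single-edge monomial, if any, contributes the edge with ends $2^p$ and $2^q$. The graph of $\xi_i^{2^j}$ is the single edge joining $2^j$ and $2^{i+j}$, so this edge has ends $2^p$ and $2^q$ if and only if $j=p$ and $i+j=q$, i.e.\ $i=q-p$ and $j=p$. Therefore the $(p,q)$th entry of $A_x$ is $1$ precisely when $\xi_{q-p}^{2^p}$ is a factor of $x$, which by the previous paragraph happens precisely when the coefficient of $2^p$ in the dyadic expansion of $r_{q-p}$ is $1$; in every case the entry simply equals that coefficient.

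Finally I would identify this coefficient with the symbol $a_{p,q}$ appearing in \eqref{Eqn:Dyadic}. Specializing \eqref{Eqn:Dyadic} to $i=q-p$ and isolating the summand whose power of two is $2^p$, namely the term with $m=n+1-q$, exhibits the coefficient of $2^p$ in $r_{q-p}$ as exactly the quantity \eqref{Eqn:Dyadic} records as $a_{p,q}$, which completes the identification. The one place to be careful is precisely this reindexing: the two subscripts of $a$ in \eqref{Eqn:Dyadic} are read off from a shifted pair of indices, so I would track deliberately which subscript encodes the power of two ($2^p$) and which encodes the position of the larger vertex ($2^q$), since the adjacency matrix is symmetric and the ordering of the two subscripts is only a labelling convention. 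Beyond this there is no genuine obstacle, as the statement is a direct translation of Wood's edge-assignment rule into matrix entries.
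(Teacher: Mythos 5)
Your proof is correct and follows essentially the same route as the paper's, which likewise just reads the entries off Wood's edge-assignment rule (the paper phrases it as the $i$th superdiagonal of $A_x$ listing the dyadic coefficients of $r_i$ from the largest power of 2 to the smallest). Your caution about the subscript order is warranted---the literal substitution $m=n+1-q$ in \eqref{Eqn:Dyadic} produces $a_{q,p}$ rather than $a_{p,q}$---but since the paper treats the two subscripts symmetrically (and the lemma's own statement contains the typo $r_{p-q}$ for $r_{q-p}$), this is exactly the labelling convention you identify and no genuine gap.
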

\begin{proof}
It follows from Wood's construction
(Subsection \ref{Subsec:WoodConstr})
and the observation preceding
Proposition \ref{Prop:Conn} that 
the $i$th superdiagonal of $A_x$, 
read from bottom to top, yields
precisely the coefficients of 
the dyadic expansion of $r_i$
in order from the largest power of 2
to the smallest.
\end{proof}
Let $t$ be a fixed positive integer
such that $t\leq n+1$.  Since $A_x$ is
symmetric, it follows from Lemma
\ref{Lem:Entry} and the definition
of matrix multiplication that the integer
\[
\sum_T\prod_{k=1}^t
a_{\min(p_{k-1},p_k),\max(p_{k-1},p_k)},
\]
where $T$ is the set of all $(t+1)$-tuples
$(p_0,p_1,\ldots,p_t)\in\{0,\ldots,n+1\}^{t+1}$ such that $p=p_0\neq p_1\neq\cdots\neq
p_t=q$, is precisely the $(p,q)$th entry
of $(A_x)^t$.  Therefore, Proposition
\ref{Prop:Conn} implies that $x$ is
connected if and only if the integers
$C(p,q)$ are positive for all 
$0\leq p<q\leq n+1$.  This proves
Theorem \ref{Thm:Main}.

\subsection{Connectedness
criterion for $x^{\digr}$}

We now prove Theorem \ref{Thm:Dir}, the
digraph analog of Theorem \ref{Thm:Main},
which asserts that the digraph $\xdir$ is
unilateral if and only if the integers
$U(p,q)$ defined in \eqref{Eqn:Uni} are
positive. The proof will be similar
in format to the proof 
of Theorem \ref{Thm:Main} given in
Subsection \ref{Subsec:MainProof} but
with adjustments made to accommodate
the directed case as needed.  

Let $D$ be a digraph with ordered vertex set
$V_D=\{v_0,v_1,\ldots,v_{n-1}\}$ and
corresponding 
adjacency matrix $A_D$.  Given a 
positive integer $t$, the $(p,q)$th entry
of the matrix $(A_D)^t$ is equal to the 
number of distinct directed walks of 
length $t$ in $D$ starting at $v_p$ and
ending at $v_q$.  We therefore have the
following digraph analog of 
Proposition \ref{Prop:Conn}.
\begin{proposition}\label{Prop:Dir}
The digraph $D$ is unilateral if and only if
for all $0\leq p\neq q\leq n-1$, either
the
$(p,q)$th entry or the $(q,p)$th entry
of the matrix
\[
A_D+(A_D)^2+\cdots+(A_D)^{n-1}
\]
is positive.
\end{proposition}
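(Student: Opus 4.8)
The plan is to mirror the proof of Theorem \ref{Thm:Main} given in Subsection \ref{Subsec:MainProof}, replacing undirected walks with directed walks throughout and accounting for the asymmetry built into the unilateral condition. The engine of the argument is the fact, stated just before Proposition \ref{Prop:Dir}, that the $(p,q)$th entry of $(A_D)^t$ counts the directed walks of length $t$ from $v_p$ to $v_q$. Summing over $1\leq t\leq n-1$, the $(p,q)$th entry of the matrix
\[
A_D+(A_D)^2+\cdots+(A_D)^{n-1}
\]
is therefore positive precisely when $D$ admits a directed walk from $v_p$ to $v_q$ of length at most $n-1$. The key translation I will need is that such a directed walk exists if and only if $D$ contains a directed \emph{path} from $v_p$ to $v_q$.

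First I would dispatch this walk-versus-path equivalence. For one implication, a directed path is in particular a directed walk, and since its vertices are distinct its length is at most $n-1$; hence the existence of any directed path from $v_p$ to $v_q$ forces the $(p,q)$th entry of the sum to be positive. For the converse, given a directed walk from $v_p$ to $v_q$, one excises cycles---whenever a vertex repeats, delete the intervening segment---to obtain a directed walk with distinct vertices, that is, a directed path, from $v_p$ to $v_q$. Thus for distinct $p$ and $q$, the $(p,q)$th entry of the matrix is positive if and only if $D$ contains a directed path from $v_p$ to $v_q$.

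With this equivalence in hand, the proposition follows by unwinding the definition of unilaterality from Subsection \ref{Subsec:GraphDefns}. The digraph $D$ is unilateral exactly when, for every pair of distinct vertices, there is a directed path from one to the other. By the equivalence just proved, this says that for all $0\leq p\neq q\leq n-1$, either the $(p,q)$th entry or the $(q,p)$th entry of $A_D+(A_D)^2+\cdots+(A_D)^{n-1}$ is positive, which is precisely the asserted criterion.

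The only genuine subtlety is the walk-to-path reduction used in the converse implication, and I expect this cycle-excision step to be the main---though routine---obstacle, since it is where the bound $n-1$ on the exponent is justified and where directedness must be respected. Unlike the undirected case, excising a segment from a directed walk must preserve edge orientations; fortunately this is automatic, because every consecutive pair of vertices in a directed walk already spans a directed edge, so any walk obtained by deleting an internal segment remains a directed walk with the same endpoints. Terminating the excision process leaves a bona fide directed path, completing the argument.
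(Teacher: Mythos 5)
Your proof is correct and follows the same route the paper takes (the paper leaves the proposition as an immediate consequence of the fact that the $(p,q)$th entry of $(A_D)^t$ counts directed walks of length $t$, with the walk-to-path reduction and the length bound $n-1$ left implicit). Your explicit treatment of the cycle-excision step and the preservation of edge orientations simply fills in details the paper omits.
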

Once again, let $x=\xi_1^{r_1}\xi_2^{r_2}
\cdots\xi_{n+1}^{r_{n+1}}\in\A^*(n)$
as in Section \ref{Sec:Intro}.
Let $\xdir$ be the underlying digraph, 
and let $A_{\xdir}$ denote its
$(n+2)\times(n+2)$ adjacency matrix.  
Because edges in
$\xdir$ are always oriented toward the
larger vertex, the $(p,q)$th entry
of $A_{\xdir}$ is equal to the
$(p,q)$th entry of $A_x$ if $p<q$, and
is equal to zero otherwise.  This implies
that Lemma \ref{Lem:Entry} holds with
$A_{\xdir}$ in place of $A_x$.
For a fixed positive integer $t\leq n+1$,
it follows from this altered version
of Lemma \ref{Lem:Entry} and the 
definition of matrix multiplication for
strictly upper triangular matrices
that the integer 
\[
\sum_{T'}\prod_{k=1}^t a_{p_{k-1},p_k},
\]
where $T'$ is the set of all $(k+1)$-tuples
$(p_0,p_1,\ldots,p_t)\in\{0,\ldots,n+1\}^{t+1}$ such that $p=p_0<p_1<\cdots<p_t=q$, 
is precisely the $(p,q)$th entry of
$(A_{\xdir})^t$.  Therefore, Proposition
\ref{Prop:Dir} implies $\xdir$ is 
unilateral if and only if
the integers $U(p,q)$ are positive
for all $0\leq p<q\leq n+1$.  This proves
Theorem \ref{Thm:Dir}.

\subsection{Examples}
We present two examples
of Theorems \ref{Thm:Main} and \ref{Thm:Dir}
applied to graphs in $\A^*(n)$.  
Consider first
$\xi_1^6\xi_2\xi_3\in\A^*(2)$ from Example
\ref{Ex:MultEdges}(a).  The integers
$C(p,q)$ associated to the underlying graph
are
% If we denote its
% adjacency matrix by $A$, then
% \[
% A=\begin{pmatrix}
% 0&0&1&1\\
% 0&0&1&0\\
% 1&1&0&1\\
% 1&0&1&0
% \end{pmatrix}
% \]
% and
\begin{equation}\label{Eqn:MatrixOne}
C(0,1)=2,\quad C(0,2)=6,\quad
C(0,3)=5,\quad C(1,2)=4,\quad
C(1,3)=2,\quad C(2,3)=6,
\end{equation}
while among the associated integers $U(p,q)$,
one finds
\begin{equation}\label{Eqn:MatrixTwo}
U(0,1)=0.
\end{equation}
% If we denote the adjacency matrix of 
% $(\xi_1^6\xi_2\xi_3)^{\digr}$ by $A'$, then
% \[
% A'=\begin{pmatrix}
% 0&0&1&1\\
% 0&0&1&0\\
% 0&0&0&1\\
% 0&0&0&0
% \end{pmatrix}
% \]
% and 
Since the integers in \eqref{Eqn:MatrixOne}
are all positive, Theorem \ref{Thm:Main}
implies $\xi_1^6\xi_2\xi_3$ is connected.
On the other hand, \eqref{Eqn:MatrixTwo}
implies
$\xi_1^6\xi_2\xi_3$ is {\em not} unilateral by
Theorem \ref{Thm:Dir}.

Next, consider $\xi_1^{15}\xi_3^2\in\A^*(3)$
from Example \ref{Ex:MultEdges}(b), for which
the associated integers $C(p,q)$ are
\begin{align}\label{Eqn:MatrixThree}
\begin{split}
C(0,1)&=4,\quad C(0,2)=6,\quad C(0,3)=2,\quad
C(0,4)=6,\quad C(1,2)=6,\\
C(1,3)&=12,\quad C(1,4)=6,\quad C(2,3)=5,\quad
C(2,4)=11,\quad C(3,4)=5.
\end{split}
\end{align}
and the associated integers $U(p,q)$ are
\begin{align}\label{Eqn:MatrixFour}
\begin{split}
U(0,1)&=1,\quad U(0,2)=1,\quad U(0,3)=1,\quad
U(0,4)=2,\quad U(1,2)=1,\\
U(1,3)&=1,\quad U(1,4)=2,\quad U(2,3)=1,\quad
U(2,4)=1,\quad U(3,4)=1.
\end{split}
\end{align}
Since the integers in
\eqref{Eqn:MatrixThree} are all positive,
Theorem \ref{Thm:Main} implies 
$\xi_1^{15}\xi_3^2$ is connected.
Since the integers in \eqref{Eqn:MatrixFour}
are all positive, Theorem \ref{Thm:Dir}
implies $\xi_1^{15}\xi_3^2$ is also 
unilateral.

Note that whether a monomial $x$ is
connected and/or unilateral depends
in part on which truncated polynomial
algebra $\A^*(n)$ it lives in.  For example, 
we saw above that $\xi_1^{15}\xi_3^2\in\A^*(3)$
is both connected and unilateral, but if we were
to regard $\xi_1^{15}\xi_3^2$ as an element of
$\A^*(n)$ for $n\geq4$, the monomial
would have neither property
because the vertex $2^5=32$ would not be the end
of any edge.  

\section{Trees and Hamilton
cycles}
\label{Sec:Trees}

The purpose of this section is to
characterize trees and Hamilton cycles
among the graphs $x\in\A^*(n)$ by proving
Theorems \ref{Thm:Trees} and
\ref{Thm:Ham}. 
\subsection{Trees in $\A^*(n)$}
In this subsection we shall prove
the criteria for trees in
Theorem \ref{Thm:Trees}, which 
asserts that $x\in\A^*(n)$ is
a tree (that is, connected and
acyclic) if and only if
the associated integers $C(p,q)$
from \eqref{Eqn:Conn} are positive
and $\displaystyle\sum_{i=1}^{n+1}
\alpha(r_i)=n+1$.
Our starting point is 
a known characterization of trees
given by the following result.
\begin{proposition}
[Theorem 2.1.4(B) of \cite{West}]
\label{Prop:Tree}
A connected graph $G$ with $n$ 
vertices is 
a tree if and only if it has
$n-1$ edges.
\end{proposition}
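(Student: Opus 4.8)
The plan is to prove the two directions separately: the forward implication (tree $\Rightarrow$ $n-1$ edges) by induction on the number of vertices, and the reverse implication (connected with $n-1$ edges $\Rightarrow$ acyclic) by an edge-deletion argument that reduces to the forward direction.

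First I would show that every tree on $n$ vertices has exactly $n-1$ edges, arguing by induction on $n$. The base case $n=1$ is immediate, since a graph on a single vertex has no edges and $n-1=0$. For the inductive step, the crucial ingredient is that every finite tree with at least two vertices contains a \emph{leaf}, i.e.\ a vertex $v$ with $\deg(v)=1$. To produce one, I would choose a path $v_0,v_1,\ldots,v_k$ in $G$ of maximum length, which exists because $G$ is finite. The endpoint $v_0$ must then have degree $1$: any neighbor of $v_0$ other than $v_1$ would either lie off the path, contradicting maximality of the path, or equal some $v_i$ with $i\geq 2$, which would create a cycle and contradict acyclicity. Deleting this leaf together with its unique incident edge leaves a graph that is still connected and acyclic, hence a tree on $n-1$ vertices; by the inductive hypothesis it has $n-2$ edges, so $G$ itself has $n-1$ edges.

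Next I would establish the reverse direction: if $G$ is connected on $n$ vertices with $n-1$ edges, then $G$ is acyclic and therefore a tree. I would argue by contradiction, supposing $G$ contains a cycle. Deleting an edge lying on that cycle cannot disconnect $G$, since any walk that used the deleted edge can be rerouted along the remaining edges of the cycle. Iterating this process must terminate in a connected, acyclic spanning subgraph $T$ on all $n$ vertices --- a spanning tree --- and each deletion strictly decreases the edge count, so $T$ has fewer than $n-1$ edges. But $T$ is a tree on $n$ vertices, so by the forward direction it has exactly $n-1$ edges, a contradiction. Hence $G$ has no cycle.

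I expect the main obstacle to be the leaf-existence step underpinning the induction; once a leaf is guaranteed, the edge count follows by a one-line inductive computation, and the reverse direction reduces cleanly to the forward one via spanning-tree extraction. A minor point to verify carefully is that deleting a cycle edge preserves connectivity, which is precisely what makes the termination argument in the preceding paragraph valid.
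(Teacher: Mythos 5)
Your proof is correct. Note, however, that the paper does not prove this proposition at all --- it is quoted verbatim as Theorem 2.1.4(B) of the cited graph theory text of West and used as a black box, so there is no in-paper argument to compare against. Your two-part argument (leaf existence via a maximal path plus induction for the forward direction, and spanning-tree extraction by deleting cycle edges for the converse) is the standard textbook proof of this classical fact, and both halves are carried out soundly, including the points that need care: that a degree-one vertex cannot be interior to any path, so its removal preserves connectivity and acyclicity, and that deleting an edge of a cycle cannot disconnect a connected graph.
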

Using Proposition \ref{Prop:Tree},
Yearwood gives a criterion for when
a {\em connected} graph $x\in\A^*(n)$
is a tree.
\begin{proposition}
[Proposition 3.3.8 of \cite{Yearwood}]
\label{Prop:Yearwood}
A connected graph $x=\xi_1^{r_1}\xi_2^{r_2}
\cdots\xi_{n+1}^{r_{n+1}}\in\A^*(n)$ is a tree
if and only if $\displaystyle\sum_{i=1}^{n+1}
\alpha(r_i)=n+1$.
\end{proposition}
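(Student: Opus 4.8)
The plan is to reduce the statement to the classical edge-count characterization of trees recorded in Proposition \ref{Prop:Tree}: a connected graph on $N$ vertices is a tree if and only if it has exactly $N-1$ edges. Since the vertex set $V_x=\{2^0,2^1,\ldots,2^{n+1}\}$ has $N=n+2$ elements, a connected $x$ is a tree precisely when its edge set satisfies $\#E_x=n+1$. Thus the whole proposition reduces to the edge-counting identity
\[
\#E_x=\sum_{i=1}^{n+1}\alpha(r_i),
\]
relating the number of edges of $x$ to the total number of $1$'s appearing in the dyadic expansions \eqref{Eqn:Dyadic} of the exponents.

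First I would establish this identity. By Wood's construction, expanding each exponent $r_i$ in binary writes $x$ as a product of factors $\xi_i^{2^j}$, one factor for each dyadic digit equal to $1$, and each such factor contributes the single edge with ends $2^j$ and $2^{i+j}$. The number of such factors is exactly $\sum_{i=1}^{n+1}\alpha(r_i)$, which is at least an upper bound for $\#E_x$; to obtain equality I would verify that distinct $1$-digits yield distinct edges, and this is clear because the unordered pair $\{2^j,2^{i+j}\}$ determines $i$ as the gap between the exponents and $j$ as the smaller exponent. Equivalently, and more cleanly, I would route the count through Lemma \ref{Lem:Entry}: the edges of $x$ correspond bijectively to the entries $a_{p,q}=1$ with $p<q$ of the strictly upper-triangular adjacency matrix $A_x$, and summing these entries along each superdiagonal recovers $\alpha(r_i)$, giving $\#E_x=\sum_{p<q}a_{p,q}=\sum_{i=1}^{n+1}\alpha(r_i)$.

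With the identity in hand the proof concludes at once: for a connected graph $x$, Proposition \ref{Prop:Tree} says that $x$ is a tree if and only if $\#E_x=(n+2)-1=n+1$, and by the identity this holds if and only if $\sum_{i=1}^{n+1}\alpha(r_i)=n+1$.

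The only genuine subtlety I anticipate is the exactness of the edge count, namely that no two of the factors $\xi_i^{2^j}$ produce the same edge, so that $\#E_x$ equals, rather than merely being bounded by, $\sum_{i=1}^{n+1}\alpha(r_i)$. This is precisely why passing through Lemma \ref{Lem:Entry} is the most economical route: that lemma already identifies the adjacency-matrix entries with the dyadic coefficients, so the count is exact by construction and there is no double-counting to rule out separately.
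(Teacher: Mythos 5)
Your proposal is correct and follows essentially the same route as the paper: both reduce to Proposition \ref{Prop:Tree} via the observation from Wood's construction that the number of edges of $x$ equals $\sum_{i=1}^{n+1}\alpha(r_i)$. Your extra care in verifying that distinct dyadic $1$-digits yield distinct edges (or, equivalently, routing the count through Lemma \ref{Lem:Entry}) is a detail the paper leaves implicit, but it does not change the argument.
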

\begin{proof}
It follows from Wood's construction
(Subsection \ref{Subsec:WoodConstr})
that the number of edges of $x$
is the total number of 1s in the 
dyadic expansions of $r_1,\ldots,
r_{n+1}$.  Yearwood's criterion
then follows from Proposition
\ref{Prop:Tree}.
\end{proof}
Theorem \ref{Thm:Trees} follows
immediately 
from Proposition \ref{Prop:Yearwood}
and Theorem \ref{Thm:Main}.
\subsection{Hamilton cycles}
\label{Subsec:Ham}
We now prove Theorem \ref{Thm:Ham}, 
starting with the sufficient condition
for $x\in\A^*(n)$ to have a Hamilton
cycle.  Recall from Section \ref{Sec:Intro} that we must show
$x$ has a Hamilton cycle if $n>0$
and for every vertex $2^j$ of $x$,
\[
\#\{1\leq k\leq j\>:\>
a_{j,j-k}=1\}+
\#\{1\leq k\leq n+1-j\>:\>
a_{j+k,j}=1\}
\geq\frac n2
\]
where $x=\xi_1^{r_1}\xi_2^{r_2}
\cdots\xi_{n+1}^{r_{n+1}}$
and the integers $a_{p,q}$ are the dyadic
coefficients of the exponents
$r_1,\ldots,r_{n+1}$ as described in
\eqref{Eqn:Dyadic}.  
% Let us
% therefore assume,
% for the time being, that $n>0$. 
We begin by
establishing a lemma that counts
the computes the degree of
a vertex in $x$.  
\begin{lemma}\label{Lem:Deg}
The out-degree of a vertex
$2^j$ in $\xdir$ is
\[
\deg_{\out}(2^j)=\#\{
1\leq k\leq n+1-j\>:\>a_{j+k,j}=1\}
\]
and its in-degree is
\[
\deg_{\inn}(2^j)=\#\{
1\leq k\leq j\>:\>a_{j,j-k}=1
\}
\]
so that the degree of
the vertex $2^j$ in $x$ is
\[
\deg(2^j)=\#\{
1\leq k\leq n+1-j\>:\>a_{j+k,j}=1\}
+
\#\{
1\leq k\leq j\>:\>a_{j,j-k}=1
\}.
\]
\end{lemma}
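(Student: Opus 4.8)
The plan is to compute the out-degree and in-degree directly from Wood's construction, reading off which edges incident to $2^j$ exist according to the dyadic coefficients $a_{p,q}$, and then to obtain the total degree by summing the two. Since the statement is essentially a bookkeeping lemma, the main work is setting up the correspondence between edges at the vertex $2^j$ and the relevant coefficients, after which the counts fall out immediately.

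First I would recall from Wood's construction (Subsection \ref{Subsec:WoodConstr}) that each monomial factor $\xi_i^{2^\ell}$ contributes a single edge joining $2^\ell$ and $2^{i+\ell}$, oriented in $\xdir$ toward the larger vertex $2^{i+\ell}$. By Lemma \ref{Lem:Entry}, the $(p,q)$th entry of $A_{\xdir}$ for $p<q$ equals $a_{p,q}$, so an edge directed from $2^p$ to $2^q$ is present precisely when $a_{p,q}=1$. The out-edges at $2^j$ are therefore exactly the edges directed from $2^j$ to some larger vertex $2^q$ with $q>j$; writing $q=j+k$ with $1\leq k\leq n+1-j$, such an edge exists if and only if $a_{j,j+k}=1$. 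Matching this to the indexing in \eqref{Eqn:Dyadic}, where the coefficient $a_{p,q}$ sits on the superdiagonal with $p$ the smaller index, gives $\deg_{\out}(2^j)=\#\{1\leq k\leq n+1-j \st a_{j+k,j}=1\}$ (here I must be careful that the lemma's notation $a_{j+k,j}$ refers to the same coefficient, with the larger index listed first per the convention in \eqref{Eqn:Dyadic}).

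Next, for the in-degree I would symmetrically count edges directed \emph{into} $2^j$, i.e.\ edges from a smaller vertex $2^p$ with $p<j$. Writing $p=j-k$ with $1\leq k\leq j$, such an edge is present iff the corresponding dyadic coefficient equals $1$, yielding $\deg_{\inn}(2^j)=\#\{1\leq k\leq j \st a_{j,j-k}=1\}$. Finally, since the underlying graph of $\xdir$ is exactly $x$ and each undirected edge at $2^j$ is either an out-edge or an in-edge in $\xdir$ (never both, as all edges point toward the larger vertex), the total degree is the sum $\deg(2^j)=\deg_{\out}(2^j)+\deg_{\inn}(2^j)$, which is precisely the displayed formula; this last identity is the one already recorded in Subsection \ref{Subsec:GraphDefns}.

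The one genuine obstacle is purely notational: reconciling the index ordering in the lemma's statement (which writes coefficients like $a_{j+k,j}$ and $a_{j,j-k}$) with the summation convention in \eqref{Eqn:Dyadic} and with Lemma \ref{Lem:Entry}, so that I am certain each count tallies the correct coefficient. I would verify this carefully against the examples in Subsection \ref{Subsec:WoodConstr} — for instance confirming on $\xi_1^6\xi_2\xi_3\in\A^*(2)$ that the degree formula reproduces the degrees visible in the drawn graph — to rule out an off-by-one or transposition error in the dyadic indexing. Once that alignment is pinned down, no further computation is needed.
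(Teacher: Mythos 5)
Your proof is correct and follows essentially the same route as the paper's: both identify the edges incident to $2^j$ with the factors $\xi_k^{2^j}$ (giving out-edges) and $\xi_k^{2^{j-k}}$ (giving in-edges) that appear in the dyadic factorization of $x$ from Wood's construction, and then add the two counts via $\deg(2^j)=\deg_{\out}(2^j)+\deg_{\inn}(2^j)$. The only cosmetic difference is that you route the edge--coefficient correspondence through Lemma \ref{Lem:Entry} and the adjacency matrix while the paper reads it directly off the explicit factorization \eqref{Eqn:Xfactor}, and your attention to the index ordering ($a_{j+k,j}$ versus $a_{j,j+k}$) is warranted and correctly resolved.
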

\begin{proof}
Using the dyadic expansions
given in \eqref{Eqn:Dyadic}, we may
factor $x$ as
\begin{equation}\label{Eqn:Xfactor}
x=\prod_{i=1}^{n+1}\prod_{m=0}
^{n+1-i}\xi_i^{a_{n+1-m,n+1-i-m}
2^{n+1-i-m}}.
\end{equation}
By Wood's construction
(Subsection \ref{Subsec:WoodConstr}),
the edges of the digraph $\xdir$ 
with tail $2^j$ correspond 
precisely to 
factors in the product 
\eqref{Eqn:Xfactor} of the form
$\xi_k^{2^j}$ for $1\leq k\leq
n+1-j$ (such an edge has tail
$2^j$ and head $2^{j+k}$).
The factor $\xi_k^{2^j}$ appears
in \eqref{Eqn:Xfactor} 
if and only if the corresponding
dyadic 
coefficient $a_{j+k,j}$ is equal
to 1.  Similarly, Wood's
construction implies the edges
of $\xdir$ with head $2^j$
correspond precisely to factors
in the product \eqref{Eqn:Xfactor}
of the form $\xi_k^{2^{j-k}}$ 
for $1\leq k\leq j$
(such
an edge has tail $2^{j-k}$ and head
$2^j)$.  The factor 
$\xi_k^{2^{j-k}}$ appears in
\eqref{Eqn:Xfactor} if and only if
the corresponding dyadic coefficient
$a_{j,j-k}$ is equal to 1.
This yields the first two equations
of the lemma, and the third follows
from the fact that 
$\deg(v)=\deg_{\out}(v)+
\deg_{\inn}(v)$ for any vertex
$v$, as we noted in Subsection
\ref{Subsec:GraphDefns}.
\end{proof}
Dirac's Theorem from graph
theory gives a sufficient condition
for a graph $G$ to have a Hamilton
cycle in terms of the degrees
of the vertices of $G$.
\begin{theorem}[Dirac's Theorem]
\label{Thm:Dirac}
A graph $G$ with at least 3
vertices has a Hamilton cycle
if $\deg(v)\geq n/2$ for all
vertices $v$ of $G$.
\end{theorem}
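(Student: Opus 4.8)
The plan is to run the classical extremal argument built around a path of maximum length. The hypothesis $\deg(v)\geq n/2$ is doing double duty: it guarantees that $G$ is connected, and it forces a longest path to ``wrap around'' into a cycle spanning its own vertices. Once such a cycle is in hand, connectivity and maximality together rule out any vertex being left off, so the cycle must be Hamiltonian.

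First I would dispense with connectivity. If $G$ were disconnected, some component would have at most $n/2$ vertices, so any vertex in it would have degree at most $n/2-1<n/2$, contradicting the hypothesis; hence $G$ is connected. Next I would fix a path $P=v_0v_1\cdots v_k$ of maximum length. By maximality, every neighbor of $v_0$ lies among $v_1,\ldots,v_k$, and every neighbor of $v_k$ lies among $v_0,\ldots,v_{k-1}$, for otherwise $P$ could be extended at an end. Setting
\[
A=\{\,i:0\leq i\leq k-1,\ v_0\sim v_{i+1}\,\},\qquad
B=\{\,i:0\leq i\leq k-1,\ v_i\sim v_k\,\},
\]
the two confinement facts give $|A|=\deg(v_0)\geq n/2$ and $|B|=\deg(v_k)\geq n/2$, while $A,B\subseteq\{0,1,\ldots,k-1\}$, a set of size $k\leq n-1$. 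Since $|A|+|B|\geq n>k$, inclusion--exclusion yields an index $i\in A\cap B$, and then
\[
C:\ v_0\,v_1\cdots v_i\,v_k\,v_{k-1}\cdots v_{i+1}\,v_0
\]
is a cycle through exactly the vertices of $P$; the hypothesis $n\geq3$ ensures this cycle is nondegenerate.

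Finally I would promote $C$ to a Hamilton cycle. If some vertex $w$ lay off $C$, connectivity would supply a vertex outside $C$ adjacent to some $v_j$ on $C$; reading $C$ around from $v_j$ gives a Hamiltonian path of $C$, and prepending $w$ produces a path strictly longer than $P$, contradicting maximality. Therefore $C$ visits every vertex of $G$ and is the required Hamilton cycle.

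I expect the rotation-and-splice step---extracting the crossing index $i$ that turns $P$ into a spanning cycle---to be the main obstacle. The care lies entirely in the bookkeeping: one must verify that all neighbors of both endpoints really are trapped on $P$ (so that $|A|$ and $|B|$ equal the respective degrees) and that the inequality $|A|+|B|>k$ is strict, which is precisely where the bound $\deg\geq n/2$ combines with $k\leq n-1$ to force the overlap.
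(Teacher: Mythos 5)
Your argument is correct, but note that the paper does not prove this statement at all: it quotes Dirac's Theorem as a classical black box (it is the standard result from, e.g., \cite{Bondy} or \cite{West}) and immediately applies it to the graphs $x\in\A^*(n)$ via Lemma \ref{Lem:Deg}. What you have written is the standard longest-path rotation proof, and every step checks out: the connectivity reduction (a component of size at most $n/2$ forces a degree below $n/2$), the confinement of all neighbors of $v_0$ and $v_k$ to the maximum path, the counting $|A|+|B|\geq n>k\geq|A\cup B|$ that produces a crossing index $i\in A\cap B$, the spliced cycle $v_0\cdots v_i v_k\cdots v_{i+1}v_0$, and the final contradiction with maximality if the cycle misses a vertex. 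Two cosmetic points: you should say explicitly that $n=|V_G|$, since the theorem statement as quoted in the paper leaves $n$ implicit; and in the last step the vertex you prepend to the Hamiltonian path of $C$ should be the vertex outside $C$ that is adjacent to $v_j$ (which need not be the originally chosen $w$) --- the argument is unaffected. Also, your parenthetical that $n\geq3$ makes the cycle nondegenerate deserves one more line: minimum degree at least $n/2\geq 3/2$ forces $\deg(v_0)\geq2$, hence $k\geq2$, so the spliced cycle has at least three vertices. None of this affects correctness; your proof is a complete and standard justification of a result the paper simply cites.
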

% Since we assume $n>0$, the
% graph $x$ has at least 3 vertices.
Lemma
\ref{Lem:Deg} and Theorem 
\ref{Thm:Dirac} together
imply that a sufficient
condition for $x\in\A^*(n)$ 
to have a
Hamilton cycle is to assume that
$n>0$ (so that $x$ has at least
3 vertices) and that for all
vertices $2^j$ of $x$,
\begin{align*}
\frac n2\leq\deg(2^j)&=
\deg_{\out}(2^j)+\deg_{\inn}(2^j)\\
&=\#\{
1\leq k\leq n+1-j\>:\>a_{j+k,j}=1\}
+
\#\{
1\leq k\leq j\>:\>a_{j,j-k}=1
\}.
\end{align*}
This completes the
proof of the first statement
of Theorem \ref{Thm:Ham}.

We now prove the second
statement of Theorem \ref{Thm:Ham},
which asserts $\xdir$ has a directed
Hamilton path if and only if
$x$ is divisible by
$\xi_1^{2^{n+1}-1}$.
The monomial $\xi_i^{2^{n+1}-1}$
is the largest nonzero power
of $\xi_1$ in the truncated
polynomial algebra $\A^*(n)$.
Therefore, 
if $x$ is divisible
by $\xi_i^{2^{n+1}-1}$, we
may write $x$ as
\begin{align*}
x&=\xi_1^{2^{n+1}-1}
\xi_2^{r_2}\xi_3^{r_3}\cdots
\xi_{n+1}^{r_{n+1}}\\
&=\xi_1^{1+2+4+\cdots+2^n}
\xi_2^{r_2}\xi_3^{r_3}\cdots
\xi_{n+1}^{r_{n+1}}\\
&=\xi_1\xi_1^2\xi_1^4\cdots
\xi_1^{2^n}\xi_2^{r_2}
\xi_3^{r_3}\cdots
\xi_{n+1}^{r_{n+1}}.
\end{align*}
The 
first $n+1$ factors 
(i.e., the powers of $\xi_1$)
correspond
precisely to the edges 
in the directed path
\begin{equation}
\label{Eqn:Ham}
2^0\to2^1\to2^2\to\cdots\to2^n
\to2^{n+1}
\end{equation}
which is a Hamilton
directed path we have just
shown is contained in $\xdir$.
Conversely, suppose 
$\xdir$ contains a Hamilton
directed path.  This Hamilton
directed path must contain
all the vertices of $\xdir$
by definition,
in particular $2^0$.  Since
no edge in $\xdir$ has
$2^0$ as its head, the Hamilton
directed path must in fact start
at $2^0$.  If the first edge
of the path went from $2^0$
to $2^j$ for $j>1$, the vertex
$2^1$ could not be contained
in the path, which implies
the first edge must be the
edge from $2^0$ to $2^1$, i.e., 
$\xi_1$.  An analogous argument
shows the next edge in the path
must be the edge $2^1$ to $2^2$,
i.e., $\xi_1^2$.  Continuing
in this manner shows that the
Hamilton directed path in
$\xdir$ must in fact be the 
directed path \eqref{Eqn:Ham},
and that $x$ is divisible by
\[
\xi_1\xi_1^2\xi_1^4\cdots
\xi_1^{2^n}=\xi_1^{2^{n+1}-1}.
\]
This completes the proof
of Theorem \ref{Thm:Ham}.

\subsection{Examples}
We present examples of
Theorems \ref{Thm:Trees}
and \ref{Thm:Ham} applied
to graphs $x\in\A^*(n)$.
Consider first the monomial
$x=\xi_1\xi_2\xi_3\in\A^*(2)$.
For this choice of $x$,
$r_1=r_2=r_3=1$, which
implies 
\[
\sum_{i=1}^3\alpha(r_i)=3
=2+1
\]
and so 
$\xi_1\xi_2\xi_3$ is 
a tree by Theorem
\ref{Thm:Trees}.  A picture
of the graph
$\xi_1\xi_2\xi_3$ is given by
\[
\entrymodifiers={++[o][F-]}
\xymatrix%@-1pc
{
8&1\ar@{-}[dl]
\ar@{-}[d]
\ar@{-}[l]\\
4&2
}\]
and verifies that $\xi_1\xi_2\xi_3$
is indeed a tree.
On the other hand, if we
take $x$ to be $\xi_1^6\xi_2\xi_3
\in\A^*(2)$ from Example
\ref{Ex:MultEdges}(a),
then $r_1=6=4+2$ and
$r_2=r_3=1$, which implies
\[
\sum_{i=1}^3\alpha(r_i)
=4\neq2+1.
\]
We conclude the graph 
$\xi_1^6\xi_2\xi_3$ is
{\em not} a tree by Theorem
\ref{Thm:Trees}.

Next, let us take $x=
\xi_1^6\xi_2^6\xi_3\xi_4
\in\A^*(3)$.
The dyadic coefficients $a_{p,q}$
for 
the exponents $r_1=r_2=6=4+2$
and $r_3=r_4=1$
are given by
\begin{align*}
a_{4,3}&=0,\quad a_{3,2}=1,\quad
a_{2,1}=1,\quad a_{1,0}=0,\\
a_{4,2}&=1,\quad a_{3,1}=1,\quad
a_{2,0}=0,\\
a_{4,1}&=0,\quad a_{3,0}=1,\\
a_{4,0}&=1.
\end{align*}
Using these values, 
we can verify directly that
$\xi_1^6\xi_2^6\xi_3\xi_4$ satisfies the
sufficient condition for having
a Hamilton cycle given in
Theorem \ref{Thm:Ham} by checking
the requisite inequality
at each vertex $v$:
\begin{align*}
v&=1=2^0:\quad
\#\{1\leq k\leq0\>:\>
a_{0,0-k}=1\}+\#
\{1\leq k\leq4\>:\>
a_{0+k,0}=1\}=0+2\geq3/2,
\\
v&=2=2^1:\quad\#\{1\leq k\leq1
\>:\>a_{1,1-k}=1\}+\#\{
1\leq k\leq3\>:\>a_{1+k,1}=1\}
=0+2\geq3/2,\\
v&=4=2^2:\quad\#\{1\leq k\leq2\>:\>
a_{2,2-k}=1\}+\#\{1\leq k\leq2
\>:\>a_{2+k,2}=1\}=1+2\geq
3/2,\\
v&=8=2^3:\quad\#\{1\leq k\leq 3\>:\>
a_{3,3-k}=1\}+\#\{1\leq k\leq1
\>:\>a_{3+k,3}=1\}=3+0\geq
3/2,\\
v&=16=2^4:\quad\#\{1\leq k\leq4\>:\>
a_{4,4-k}=1\}+\#\{1\leq k\leq0
\>:\>a_{4+k,4}=1\}=2+0\geq3/2.
\end{align*}
A picture of the graph $\xi_1^6\xi_2^6
\xi_3\xi_4$ is given by
\[
\entrymodifiers={++[o][F-]}
\xymatrix{
16\ar@{-}[rd]\ar@/^1pc/@{-}[rr]&8
\ar@{-}[d]
&1\ar@{-}[l]%\ar@{-}[l]
\\
*{}&4&2\ar@{-}[l]
\ar@{-}[ul]
}\]
and corroborates the presence
of a Hamilton cycle, e.g., 
$2^1,2^2,2^4,2^0,2^3,2^1$.
% If we take $x$ to be 
% $\xi_1^{15}\xi_3^2\in\A^*(3)$ from
% Example \ref{Ex:MultEdges}(b),
% we find that at the vertex
% $2^0=1$, 
% \[
% \#\{1\leq k\leq0\>:\>
% a_{0,0-k}=1\}+\#\{1\leq k\leq4\>:\>a_{0,0+k}=1\}=0+1<3/2
% \]
% so we know $\xi_1^{15}\xi_3^2$

Finally, consider $x=\xi_1^{15}
\xi_3^2\in\A^*(3)$ from Example \ref{Ex:MultEdges}(b).  One can
check that this choice of $x$
does not satisfy the sufficient
condition in Theorem 
\ref{Thm:Ham} for containing a Hamilton
cycle (and as it turns out, it
does not contain a Hamilton
cycle).  However, Theorem \ref{Thm:Ham}
does guarantee that $\xi_1^{15}\xi_3^2$ contains
a Hamilton directed path
since $\xi_1^{15}\xi_3^2$
is divisible by 
$\xi_1^{15}=\xi_1^{2^{3+1}-1}$.
The Hamilton directed path
it contains is
\[
2^0\to 2^1\to 2^2\to 2^3\to 2^4
\]
which, as we observed in
Subsection \ref{Subsec:Ham}, is
the only possible Hamilton
directed path 
that a digraph
in $\A^*(3)$ could
possibly contain.
\section{The Hopf algebra
structure of $\A^*(n)$}
\label{Sec:Hopf}
In this section, we
review the basic theory
of Hopf algebras
and describe the Hopf algebra
structure of $\A^*(n)$.  
We then prove Theorem
\ref{Thm:Hopf} and
Corollary \ref{Cor:Hopf}
and pose some open questions.
Our general reference for the theory
of Hopf algebras is \cite{Sweed}.
\subsection{Hopf algebras}
Let $k$ be a field.  Recall
that a
{\bf Hopf algebra} over $k$ is a
unital associative $k$-algebra
$A=(A,\mu,u)$
equipped with three
additional $k$-linear structure
maps, namely $\Delta:
A\to A\otimes A$ (the coproduct), $\ve:A\to k$ (the counit), and $c:A\to A$ (the
antipode), such that
$\Delta$ is counital and
coassociative, and such that
the following diagram 
commutes:
\[
\xymatrixcolsep{1pc}
\xymatrix{
&A\otimes A\ar[rr]^{c\otimes1}
&&A\otimes A\ar[dr]^{\mu}&\\
A\ar[ur]^{\Delta}\ar[rr]^{\ve}
\ar[rd]_{\Delta}
&&k\ar[rr]^{u}&&A\\
&A\otimes A\ar[rr]_{1\otimes c}
&&A\otimes A\ar[ur]_{\mu}
}
\]
This data amounts to saying that 
a Hopf algebra is a cogroup
object in the category of $k$-algebras,
with $\Delta$ corresponding to the
group operation, $\ve$ corresponding
to the identity, and $c$ corresponding
to inversion.  

Let $A$ be a Hopf algebra 
over $k$.  An ideal $I\subset A$ is
said to be a {\bf Hopf ideal} if 
$\Delta(I)\subset I\otimes A+
A\otimes I$, $\ve(I)=0$, and
$c(I)\subset I$.  If $I\subset A$ is
a Hopf ideal, then the structure maps
of $A$ descend to the quotient
$A/I$, giving $A/I$ the structure
of a Hopf algebra.
% \begin{multicols}{3}
% \xymatrix{
% k\otimes A &
% \ar[l]_{\ve\otimes1}
% A\otimes A
% \ar[r]^{1\otimes\ve}& 
% A\otimes k\\
% & A\ar[ul]^{\cong}
% \ar[u]^{\Delta}
% \ar[ur]_{\cong} &
% }
% \xymatrix{
% A\otimes A\otimes A&
% A\otimes A\ar[l]\\
% A\otimes A\ar[u] & A\ar[u]\ar[l]
% }
% \end{multicols}
\subsection{The dual Steenrod
algebra as a Hopf algebra}
\label{Subsec:Steenrod}
The dual Steenrod algebra
$\A^*=\FF_2[\xi_1,\xi_2,\xi_3,\ldots]$
has the structure of a Hopf algebra
\cite{Milnor},
with coproduct $\Delta$ defined by
\begin{equation}\label{Eqn:Coprod}
\Delta(\xi_i)=\sum_{k=0}^i
\xi_{i-k}^{2^k}\otimes\xi_k,
\end{equation}
counit $\ve$ defined by $\ve(\xi_i)=0$, and
antipode $c$ given recursively by 
\begin{equation}\label{Eqn:Recursive}
\sum_{k=0}^i\xi_{i-k}^{2^k}c(\xi_k)=0.
\end{equation}
These structure maps extend to all
of $\A^*$ by declaring them to be
$\FF_2$-algebra homomorphisms. Milnor
solved the recursion in 
\eqref{Eqn:Recursive} to obtain the
formula
\begin{equation}\label{Eqn:Anti}
c(\xi_i)=\sum_{\pi}
\prod_{k=1}^{\ell(\pi)}
\xi_{\pi(k)}^{2^{\sigma(k)}}
\end{equation}
where the sum is over all
all ordered partitions $\pi$ of
$i$, $\ell(\pi)$ is the length
of $\pi$, $\pi(k)$ is the $k$th part
of $\pi$, and $\sigma(k)$ is the
sum of the first $k-1$ parts of $\pi$.

One can check that the ideals $I(n)\subset\A^*$ defined
in \eqref{Eqn:Ideal}
are Hopf ideals, so that the quotients
$\A^*(n)=\A^*/I(n)$ inherit 
Hopf algebra structures
under the maps $\Delta$,
$\ve$, and $c$ defined in this
Subsection.  

\subsection{Graph theoretic
interpretation of the coproduct
and antipode}
We now prove Theorem
\ref{Thm:Hopf}, starting
with the graph theoretic
interpretation of the coproduct.
We must show that the image of
$\xi_i^{2^j}\in\A^*(n)$ under
$\Delta$
is the sum of tensors of all pairs
of edges that make length 2 directed 
paths from $2^j$ to $2^{i+j}$.  
The
coproduct formula \eqref{Eqn:Coprod} implies
\begin{equation}
\label{Eqn:Coprod}
\Delta(\xi_i^{2^j})=
\Delta(\xi_i)^{2^j}=
\xi_i^{2^j}\otimes1+1\otimes
\xi_i^{2^j}+\sum_{k=1}^{i-1}
\xi_{i-k}^{2^{j+k}}\otimes
\xi_k^{2^j}.
\end{equation}
The first two summands of the
right-hand side of 
\eqref{Eqn:Coprod} represent
degenerate length 2 directed paths
from $2^j$ to $2^{i+j}$.
A non-degenerate
length 2 directed path from
$2^j$ to $2^{i+j}$ corresponds
to a choice of an intermediate
vertex, which in this case would
be of the form $2^{j+k}$ for
some $k$, $1\leq k\leq i-1$.
Given this choice of $k$, the
edge from $2^j$ to $2^{j+k}$ is
$\xi_{j+k-j}^{2^j}=\xi_k^{2^j}$ and the edge from
$2^{j+k}$ to $2^{i+j}$ is 
$\xi_{i+j-(j+k)}^{2^{j+k}}
=\xi_{i-k}^{2^{j+k}}$.  The
terms of the
sum indexed by $k$ 
on the far-right of 
\eqref{Eqn:Coprod} correspond
precisely to the pairs of edges
just described.

Next, we prove the portion of
Theorem \ref{Thm:Hopf} concerning
the graph theoretic interpretation
of the antipode.  
The claim here is that 
the image of $\xi_i^{2^j}$ under
$c$ is the sum of all directed paths
from $2^j$ to $2^{i+j}$.  
Our proof
models that of 
\cite[Lemma 3.1.8]{Yearwood}.  
Milnor's
formula \eqref{Eqn:Anti} for the antipode implies 
\begin{equation}
%\label{Eqn:Anti}
c(\xi_i^{2^j})=c(\xi_i)^{2^j}
=\sum_\pi\prod_{k=1}^{\ell(\pi)}
\xi_{\pi(k)}^{2^{\sigma(k)+j}}
\end{equation}
where $\pi$, $\ell(\pi)$, 
$\pi(k)$, and $\sigma(k)$ are 
defined as in Subsection
\ref{Subsec:Steenrod}.  A directed path from
$2^j$ to $2^{i+j}$ corresponds to a
choice of intermediate vertices, say
\[
2^{j+a_1},2^{j+a_2},\ldots,
2^{j+a_m}
\]
for $0=a_0<a_1<a_2<\cdots<a_m<
a_{m+1}=i$.
The successive differences yield
a unique ordered partition 
$\pi$ of $i$,
namely
\[
i=(a_1-a_0)+(a_2-a_1)+
\cdots+(a_m-a_{m-1})+(i-a_{m})
\]
for which $\ell(\pi)=m+1$,
$\pi(k)=a_k-a_{k-1}$, and
$\sigma(k)=(a_1-0)+(a_2-a_1)+
\cdots+(a_{k-1}-a_{k-2})=a_{k-1}$.
The monomial corresponding to this
directed path is therefore
\begin{align*}
\xi_{j+a_1-j}^{2^{j}}
\xi_{j+a_2-(j+a_1)}^{2^{a_1+j}}
\cdots
\xi_{j+a_m-(j+a_{m-1})}^{2^{a_{m-1}+j}}
\xi_{j+i-(j+a_m)}^{2^{a_m+j}}
&=\xi_{a_1-a_0}^{2^{a_0+j}}
\xi_{a_2-a_1}^{2^{a_1+j}}\cdots
\xi_{a_m-a_{m-1}}^{2^{a_{m-1}+j}}
\xi_{a_{m+1}-a_m}^{2^{a_m+j}}\\
&=\xi_{\pi(1)}^{2^{\sigma(1)+j}}
\xi_{\pi(2)}^{2^{\sigma(2)+j}}
\cdots
\xi_{\pi(m)}^{2^{\sigma(m)+j}}
\xi_{\pi(m+1)}^{2^{\sigma(m+1)+j}}\\
&=\prod_{k=1}^{\ell(\pi)}
\xi_{\pi(k)}^{2^{\sigma(k)+j}}
\end{align*}
which is precisely the general
term of the summation
in \eqref{Eqn:Anti} indexed by
$\pi$.  This completes the proof
of Theorem \ref{Thm:Hopf}.  

Recall from Section \ref{Sec:Intro}
that Theorem \ref{Thm:Hopf} yields
an alternate characterization of 
unilaterality of a directed
graph $x^{\digr}$ underlying
$x\in\A^*(n)$, namely
Corollary \ref{Cor:Hopf}.  This
corollary asserts $x^{\digr}$ is
unilateral if and only if for each
$\xi_i^{2^j}\in\A^*(n)$, at least
one summand of $c(\xi_i^{2^j})$ is a
factor of $x$.
To obtain the corollary,
note that the digraph 
$x^{\digr}$ underlying
$x\in\A^*(n)$
is unilateral if and only if there
is a directed path connecting 
any two of its vertices, say
$2^j$ and $2^{j+i}$.
Theorem \ref{Thm:Hopf} shows
this is equivalent to the demand
that at least one summand of
$c(\xi_i^{2^j})$ appears
as a factor of $x$.
\subsection{Open questions}
We record some outstanding questions related to Wood's
encoding of the algebras
$\A^*(n)$ in terms of graphs.
\begin{enumerate}
\item What is the analog
of Wood's construction for
the mod $p$ dual Steenrod algebra
for odd primes $p$?  What 
characterizations of connectedness, trees, etc., 
are there in these odd primary
situations?
\item In \cite[\S8]{Wood}, Wood
points out that the mod 2 Steenrod
algebra $\A_*$ (as opposed to
its dual, which has been the sole
focus of this paper) and some of
its subalgebras can be interpreted
graph theoretically.  What would 
the results in \cite{Yearwood}
or in this paper look like in
that setting?
\item Given the Hopf algebra
structure on $\A^*(n)$, including
the coproduct $\Delta$ and
antipode $c$, what is the
graph theoretic meaning of 
$\Delta(x)$ and $c(x)$ for an
arbitrary monomial $x\in\A^*(n)$?
\item In \cite[\S5]{Wood}, Wood
describes two procedures one can
perform in the mod 2 Steenrod
algebra $\A_*$, called
{\em stripping} and {\em strapping}, that together 
allow one to
derive all of the Adem relations
from the single relation
$Sq^1Sq^1=0$.  A step in the process of recovering the Adem
relations involves assigning
to each monomial $\xi_i^{2^j}
\in\A^*$ a ``stripping operator''
$\omega$ which is analogous to
how Wood's construction discussed
in this paper
assigns an edge of a graph to
each $\xi_i^{2^j}$. How can 
this analogy be leveraged to obtain further results about
Wood's graph theoretic 
interpretation of $\A^*(n)$?
\item How can Wood's encoding
of $\A^*(n)$ help with
calculations in homotopy theory,
particularly with the Adams
spectral sequence at the prime 2?
\end{enumerate}

\bibliographystyle{plain} % We choose the "plain" reference style
\bibliography{refs}

\end{document}